%
%
%
\documentclass[11pt]{amsart}
\usepackage{graphicx, color}
\usepackage{amscd}
\usepackage{amsmath,empheq}
\usepackage{amsfonts}
\usepackage{amssymb}
\usepackage{mathrsfs}

\usepackage[all]{xy}

\textwidth=6in \textheight=9.2in \topmargin=-0.5cm
\oddsidemargin=0.5cm \evensidemargin=0.5cm

\usepackage{todonotes}
\usepackage{enumitem}

\newtheorem{theorem}{Theorem}
\newtheorem{remark}[theorem]{Remark}
\newtheorem{lemma}[theorem]{Lemma}
\newtheorem{proposition}[theorem]{Proposition}

\newtheorem{definition}[theorem]{Definition}



              %
              %
\DeclareMathOperator*{\divergenz}{div}              %
\DeclareMathOperator*{\essinf}{ess ~inf}         %
\DeclareMathOperator*{\Ss}{S}         %

\newcommand{\N}{\mathbb{N}}

\newcommand{\R}{\mathbb{R}}

\newcommand{\Lp}[1]{L^{#1}(\Omega)}

\newcommand{\into}{\int_{\Omega}}

\newcommand{\close}{\overline{\Omega}}

\numberwithin{theorem}{section}
\numberwithin{equation}{section}

\title[Double phase problems for the Baouendi-Grushin operator]{Nonvariational and singular double phase \\
problems for the Baouendi-Grushin operator }

\author[A. Bahrouni]{Anouar Bahrouni}
\address[A. Bahrouni]{Mathematics Department, University of Monastir,
Faculty of Sciences, 5019 Monastir, Tunisia}
\email{bahrounianouar@yahoo.fr}

\author[V.D. R\u{a}dulescu]{Vicen\c{t}iu D. R\u{a}dulescu}\thanks{Corresponding author: Vicen\c tiu D. R\u adulescu ({\tt radulescu@inf.ucv.ro}).}
\address[V.D. R\u{a}dulescu]{Faculty of Applied Mathematics, AGH University of Science and Technology, al. Mickiewicza 30, 30-059 Krak\'{o}w, Poland  \& Department of Mathematics, University of Craiova, 200585 Craiova, Romania \& `Simion Stoilow' Institute of Mathematics of the Romanian Academy, P.O. Box 1-764, 014700 Bucharest, Romania}
\email{\tt radulescu@inf.ucv.ro}

\author[D.D. Repov\v{s}]{Du\v{s}an D. Repov\v{s}}
\address[D.D. Repov\v{s}]{Faculty of Education and Faculty of Mathematics and Physics, University of Ljubljana \& Institute of Mathematics, Physics and Mechanics, 1000 Ljubljana, Slovenia}
\email{\tt dusan.repovs@guest.arnes.si}

\subjclass[2010]{35J70, 35P30, 76H05} \keywords{Baouendi-Grushin
operator, double phase problem, singular term, existence of
solutions}

\begin{document}
  \begin{abstract}
In this paper we introduce a new double phase Baouendi-Grushin type
operator with variable coefficients. We give basic properties
of the corresponding functions space and prove a compactness result.
In the second part, using topological argument, we prove the
existence of weak solutions of some nonvariational problems in which
this new operator is present. The present paper extends and complements some of our
previous contributions related to double phase anisotropic
variational integrals.
  \end{abstract}

\maketitle

\section{Introduction}\label{section_introduction}
The present paper is motivated by recent  fundamental enrichment to
the mathematical analysis of nonlinear models with unbalanced
growth. We mainly refer to the pioneering contributions of
Marcellini \cite{marce1,marce2} who studied lower semicontinuity and
regularity properties of minimizers of certain quasiconvex
integrals. Related problems are inspired by models arising in
nonlinear elasticity and they describe the deformation of an elastic
body, see Ball \cite{ball1,ball2}.\\

 More precisely, we are concerned
with the following nonlinear
 equations of double phase Baouendi-Grushin type
 \begin{equation}\label{imain}
-\Delta_{G,a}u+|u|^{G(z)-2}u=K(z)f(u), \ \ z\in \mathbb{R}^{N},
 \end{equation}
 where $N\geq 3$, $ K\in C(\mathbb{R}^{N})$, $f\in C(\mathbb{R})$,
 while $-\Delta_{G,a}$ stands for a new double phase
 Baouendi-Grushin type operator with variable exponents (see \eqref{baouendi}).\\

 The main aim of our work is to introduce a new  double phase
 Baouendi-Grushin type operator with variable exponents and its suitable functions space. Our
abstract results related to the new function space are motivated by
the existence of solutions for nonvariational problems of type
\eqref{imain}. The present paper complements our previous
contributions related to double phase
anisotropic variational integrals, see \cite{Bahrouni-Radulescu-Repovs-2018,Bahrouni-Radulescu-Repovs-2019,Bahrouni-Radulescu-Winkert-2019, Bahrouni-Radulescu-2020}.\\

First, we recall the notion of Baouendi-Grushin operator with variable growth.
Let $\Omega \subset \R^N$, $N>1$, be a  domain with smooth boundary
$\partial\Omega$ and let $n,m$ be nonnegative integers such that
$N=n+m$. This means that $\R^N=\R^n\times\R^m$ and so $z \in \Omega$
can be written as $z=(x,y)$ with $x \in \R^n$ and $y\in\R^m$. In
this paper $G:\close \to (1,\infty) $ is supposed to be a continuous
function and $\Delta_{G(x,y)}$ stands for the Baouendi-Grushin
operator with variable coefficient, which is defined by
\begin{align*}
    \Delta_{G(x,y)}u
    &=\divergenz\left(\nabla_{G(x,y)}u\right)\\
    &=\sum_{i=1}^n \left(|\nabla_xu|^{G(x,y)-2}u_{x_i}\right)_{x_i}+|x|^\gamma \sum_{i=1}^m \left(|\nabla_yu|^{G(x,y)-2}u_{y_i}\right)_{y_i},
\end{align*}
where
\begin{align*}
    \nabla_{G(x,y)}u=\mathcal{A}(x)
    \begin{bmatrix}
    |\nabla_xu|^{G(x,y)-2} & \nabla_x u\\[2ex]
    |x|^\gamma |\nabla_yu|^{G(x,y)-2} & \nabla_y u
    \end{bmatrix}
\end{align*}
and
\begin{align*}
    \mathcal{A}(x)=
    \begin{bmatrix}
    I_n & 0_{n,m}\\[1ex]
    0_{m,n} & |x|^\gamma I_m
    \end{bmatrix}\in \mathcal{M}_{N\times N}(\R),
\end{align*}
with $I_n$ being the identity matrix of size $n\times n$, $O_{n,m}$
is the zero matrix of size $n\times m$ and $\mathcal{M}_{N\times N}$
stands for the class of $(N\times N)$--matrices with real-valued
entries. From the representation above it is clear that
$\Delta_{G(x,y)}$ is degenerate along the $m$-dimensional subspace
$M:=\{0\}\times\R^m$ of $\R^N$.

The differential operator $\Delta_{G(x,y)}$ generalizes the
degenerate operator
$$\frac{\partial^2}{\partial x^2}+x^{2r}\frac{\partial^2}{\partial y^2}\quad (r\in{\mathbb N})$$ introduced
independently by
 Baouendi \cite{baouendi} and Grushin \cite{grushin}.
The Baouendi--Grushin operator can be viewed as the Tricomi operator
for transonic flow restricted to subsonic regions. On the other
hand, a second-order differential operator $T$ in divergence form on
the plane, can be written as an operator whose principal part is a
Baouendi-Grushin-type operator, provided that the principal part of
$T$ is nonnegative and its quadratic form does not vanish at any
point, see Franchi \& Tesi \cite{franchi}. For recent contributions
to the study of double-phase problems we cite Beck \& Mingione
\cite{beck},  { Eleuteri, Marcellini \& Mascolo \cite{Eleuteri}}, Papageorgiou, R\u adulescu \& Repov\v{s} \cite{prrzamp,
prrpams},  {Pucci {\it et al.} \cite{Pucci0, Pucci1}}, and Zhang \& R\u adulescu \cite{zhang}.  {We refer to Marcellini \cite{Marce} and Mingione \& R\u adulescu \cite{Mingi} for surveys of recent results on elliptic variational problems with nonstandard growth conditions and related to different kinds of nonuniformly elliptic operators.} \\

Now, we are able to introduce the new Baouendi-Grushin type operator
with variable coefficients, which is defined by
\begin{align}\label{baouendi}
    \Delta_{G,a}u
    &=\divergenz\left(\nabla_{G(x,y)}u\right)\\
    &=\sum_{i=1}^n \left(|\nabla_xu|^{G(x,y)-2}u_{x_i}\right)_{x_i}+a(x) \sum_{i=1}^m
    \left(|\nabla_yu|^{G(x,y)-2}u_{y_i}\right)_{y_i}\nonumber.
\end{align}

The main goal of our recent paper \cite{Bahrouni-Radulescu-2020} was
to study a singular systems in the whole space $\mathbb{R}^{N}$ in which
the Baouendi-Grushin operator ($-\Delta_{G(x,y)})$ is present. So,
the main difficulty is the lack of compactness corresponding to the
whole Euclidean space. To overcome this difficulty, we proved a related compactness
property. However, the interval of compactness is too short. So, we are not
 able to study a large number of equations driven by
$-\Delta_{G(x,y)}$ in the whole space $\mathbb{R}^{N}$.
 For this reason and in order to get a better compactness result, we
 introduced
the new operator $-\Delta_{G,a}$.   {Our abstract
results are motivated by the existence of solutions of the following
class of nonlinear equation}
\begin{equation}\label{int1}
-\Delta_{G,a}u=-\mbox{div}(\alpha_1
u\nabla_{x}r)-\mbox{div}(\alpha_2a^{\frac{1}{G(x,y)}}(x)u\nabla_{y}r)+f(z,u),
\ \ z=(x,y)\in \mathbb{R}^{N},
\end{equation} where $\Omega\subset \mathbb{R}^{N}$ is supposed to be a bounded
domain. {   Another motivation comes from singular problems
in the form}
\begin{equation}\label{int2}
-\Delta_{G,a}u+|u|^{G(x,y)-2}u=\frac{b(x,y)}{u^{\sigma(x,y)}}, \ \
(x,y)\in \mathbb{R}^{N},
\end{equation}
where $\sigma(\cdot) \in (0,1)$ and $b$ is positive function. \\

The paper is organized as follows. In Section $2$ we present the
basic properties of variable Lebesgue space and introduce the main tools
which will be used later. New properties concerning the new operator
($-\Delta_{G,a}$) will be discussed in Section $3$.
 In Section $4$, combining these abstract results with the
 topological argument, we study a nonvariational problem in which
 $-\Delta_{G,a}$ is present. In last section, we deal with purely
 singular double phase equation.
 We refer to the monograph by  Papageorgiou,  R\u adulescu \& Repov\v{s} \cite{prrbook} as a general reference for the abstract methods used in this paper.
\section{Terminology and the abstract setting}\label{section_preliminaries}

In this section we recall some necessary definitions and properties
of variable exponent spaces. We refer to the papers of Bahrouni \&
Repov\v{s} \cite{Bahrouni-Repovs-2018}, H\'{a}jek, Montesinos
Santaluc\'{\i}a, Vanderwerff \& Zizler
\cite{Hajek-Montesinos-Santalucia-Vanderwerff-Zizler-2008}, Musielak
\cite{Musielak-1983}, R\u{a}dulescu \cite{Radulescu-2015, rad2019},
R\u{a}dulescu \& Repov\v{s} \cite{Radulescu-Repovs-2015} and the
references therein. Consider the set
\begin{align*}
    C_+(\overline\Omega)=\left\{p\in C(\close)\ \bigg| \ p(x)>1 \ \text{for all } x\in\close\right\}
\end{align*}
and define for any $p\in C_+(\close)$
\begin{align*}
    p^+:=\sup_{x\in\close} p(x)\qquad
    \text{and}\qquad
    p^-:= \inf_{x\in\close}p(x).
\end{align*}
Then $1<p^-\leq p^+<\infty$ for each $p\in C_+(\close)$. The
variable exponent Lebesgue space $\Lp{p(\cdot)}$ is defined by
\begin{align*}
    \Lp{p(\cdot)}=\left\{u\colon \Omega \to \R \ \bigg | \ u\text{ is measurable and} \int_\Omega|u(x)|^{p(x)}\,dx<\infty\right\}
\end{align*}
equipped with the Luxemburg norm
\begin{align*}
    \|u\|_{p(\cdot),\Omega}=\inf\left\{\mu>0 \ \bigg | \ \into\left|\frac{u(x)}{\mu}\right|^{p(x)}\,dx\leq 1\right\}.
\end{align*}
If $\Omega=\mathbb{R}^{N}$, we denote $\|u\|_{p(\cdot),\Omega}=
\|u\|_{p(\cdot)}$.

 It is well known that $\Lp{p(\cdot)}$ is a reflexive
Banach space.

Let $L^{q(x)}(\Omega)$ denote the conjugate space of
 $L^{p(x)}(\Omega)$, where $1/p(x)+1/q(x)=1$. If $u\in
 L^{p(x)}(\Omega)$ and $v\in L^{q(x)}(\Omega)$ then  the following
 H\"older-type inequality holds:
 \begin{equation*}
 \left|\int_\Omega uv\;dx\right|\leq\left(\frac{1}{p^-}+
 \frac{1}{q^-}\right)\|u\|_{p(.)}\|v\|_{q(.)}\,.
 \end{equation*}
 Also, if $p_j\in C_+(\overline\Omega)$ ($j=1,2,\ldots, k$) and
 $$\frac{1}{p_1(x)}+\frac{1}{p_2(x)}+\cdots +\frac{1}{p_k(x)}=1,$$
 then for all $u_j\in L^{p_j(x)}(\Omega)$ ($j=1,\ldots ,k$) we have
 \begin{equation}\label{Hol1}
 \left|\int_\Omega u_1u_2\cdots u_k\;dx\right|\leq\left(\frac{1}{p_1^-}+
 \frac{1}{p_2^-}+\cdots +\frac{1}{p_k^-}\right)|u_1|_{p_1(x)}|u_2|_{p_2(x)}\cdots |u_k|_{p_k(x)}\,.
 \end{equation}

Moreover, if $p_1\leq p_2$ in $\Omega$ and $\Omega$ has finite Lebesgue measure, then there exists the
continuous embedding
\begin{equation}\label{inj1}
L^{p_2(\cdot)}(\Omega)\hookrightarrow L^{p_1(\cdot)}(\Omega).
\end{equation}

The following two propositions will be useful in the sequel, see R\u adulescu \& Repov\v{s} \cite[p. 11]{Radulescu-Repovs-2015}

\begin{proposition}\label{pr1}
    Let
    \begin{align*}
        \rho_1(u)=\displaystyle \int_{\Omega}|u|^{p(x)}\,dx \quad \text{for all }u\in L^{p(\cdot)}(\Omega).
    \end{align*}
    Then the following hold:
    \begin{enumerate}
    \item[(i)]
        $\|u\|_{p(\cdot),\Omega}<1 \ (\text{resp}., =1;>1)$ if and only if $\rho_1(u)<1 \ (\text{resp}., =1;>1)$;
    \item[(ii)]
        $\|u\|_{p(\cdot),\Omega}>1$ implies $\|u\|_{p(\cdot),\Omega}^{p^{-}}\leq \rho_1(u) \leq \|u\|_{p(\cdot),\Omega}^{p^{+}}$;
    \item[(iii)]
        $\|u\|_{p(\cdot),\Omega}<1$ implies $\|u\|_{p(\cdot),\Omega}^{p^{+}}\leq \rho_1(u) \leq \|u\|_{p(\cdot),\Omega}^{p^{-}}$.
    \end{enumerate}
\end{proposition}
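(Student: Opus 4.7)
The plan is to reduce all three assertions to a study of the auxiliary function $\varphi_u(\mu) := \rho_1(u/\mu) = \int_{\Omega} |u(x)|^{p(x)}\mu^{-p(x)}\,dx$ defined for $\mu > 0$ and a fixed $u \in L^{p(\cdot)}(\Omega)$ with $u \not\equiv 0$ (the case $u \equiv 0$ is trivial, since then both the norm and the modular vanish). I would first establish that $\varphi_u$ is continuous and strictly decreasing on $(0,\infty)$, with $\varphi_u(\mu) \to +\infty$ as $\mu \to 0^{+}$ and $\varphi_u(\mu) \to 0$ as $\mu \to +\infty$. Monotonicity is pointwise on the integrand (and strict on the positivity set of $u$, which has positive measure); continuity and the limit at $+\infty$ follow from dominated convergence once one knows that $\varphi_u$ is finite at some point, which is exactly the defining property of $L^{p(\cdot)}(\Omega)$; the blow-up at $0^{+}$ is monotone convergence. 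By the intermediate value theorem there is then a unique $\mu^{*} > 0$ with $\varphi_u(\mu^{*}) = 1$, and unwinding the infimum in the Luxemburg norm forces $\|u\|_{p(\cdot),\Omega} = \mu^{*}$. In particular $\rho_1(u/\|u\|_{p(\cdot),\Omega}) = 1$.

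Assertion (i) now follows immediately from monotonicity: setting $\lambda = \|u\|_{p(\cdot),\Omega}$, we have $\rho_1(u) = \varphi_u(1)$, which is $<1$, $=1$, or $>1$ exactly when $1$ exceeds, equals, or is less than $\mu^{*} = \lambda$. For (ii) and (iii) I would compare the integrand $|u(x)|^{p(x)}\lambda^{-p(x)}$ to $|u(x)|^{p(x)}\lambda^{-p^{\pm}}$. When $\lambda > 1$ one has $\lambda^{-p^{+}} \leq \lambda^{-p(x)} \leq \lambda^{-p^{-}}$ pointwise, and integrating the identity $\rho_1(u/\lambda) = 1$ against these bounds yields $\lambda^{p^{-}} \leq \rho_1(u) \leq \lambda^{p^{+}}$. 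When $\lambda < 1$ the inequalities reverse, giving $\lambda^{p^{+}} \leq \rho_1(u) \leq \lambda^{p^{-}}$, which is (iii).

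The main obstacle is really the preliminary step, namely verifying that the Luxemburg infimum is attained so that $\rho_1(u/\|u\|_{p(\cdot),\Omega}) = 1$; this rests on the continuity of $\varphi_u$, which in turn depends on producing an integrable majorant uniformly on a neighborhood of the point of interest. Membership in $L^{p(\cdot)}(\Omega)$ supplies such a majorant at some single value of $\mu$, and the pointwise monotonicity of $\mu \mapsto |u|^{p(x)}\mu^{-p(x)}$ in $\mu$ propagates it throughout the relevant interval. Once this is secured, the remaining estimates (i)--(iii) are elementary manipulations of the modular, and the whole proof fits in a few lines.
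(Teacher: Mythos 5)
Your proof is correct and follows the standard textbook route. The paper itself does not supply a proof of this proposition; it simply cites R\u adulescu \& Repov\v{s} \cite[p.~11]{Radulescu-Repovs-2015}, and the argument given there (and in Fan--Zhao, Diening et al.) is essentially the one you reconstruct: establish the \emph{unit-ball property} $\rho_1\bigl(u/\|u\|_{p(\cdot),\Omega}\bigr)=1$ for $u\neq 0$ by showing that $\mu\mapsto\rho_1(u/\mu)$ is continuous, strictly decreasing, blows up at $0^{+}$ and tends to $0$ at $\infty$ (so the Luxemburg infimum is attained), and then squeeze $\lambda^{-p(x)}$ between $\lambda^{-p^{\pm}}$ in the identity $\int_{\Omega}|u|^{p(x)}\lambda^{-p(x)}\,dx=1$, with the direction of the squeeze depending on whether $\lambda=\|u\|_{p(\cdot),\Omega}$ is above or below $1$. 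Two small points worth making explicit in a final write-up: (a) finiteness of $\varphi_u$ on all of $(0,\infty)$ really does use $p^{+}<\infty$ (via $\mu^{-p(x)}\leq\max(\mu^{-p^{-}},\mu^{-p^{+}})$), which is guaranteed here by $p\in C_{+}(\overline{\Omega})$; and (b) the dominated-convergence majorant for continuity near a given $\mu_0$ should be taken at some $\mu_1\in(0,\mu_0)$ so that it works on a two-sided neighborhood, whereas "propagating from one $\mu$" as you phrase it only gives a one-sided bound. Neither is a genuine gap, just wording to tighten.
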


\begin{proposition}\label{pr2}
    Let
    \begin{align*}
        \rho_1(u)=\displaystyle \int_{\Omega}|u|^{p(x)}\,dx \quad \text{for all }u\in L^{p(\cdot)}(\Omega).
    \end{align*}
    If $u,u_{n}\in L^{p(\cdot)}(\Omega)$ and  $n\in \N$, then the
    following statements are equivalent:
    \begin{enumerate}
    \item[(i)]
        $\displaystyle \lim_{n\to +\infty} \|u_{n}-u\|_{p(\cdot),\Omega}=0$;\\
    \item[(ii)]
        $\displaystyle \lim_{n\to +\infty} \rho_1( u_{n}-u)=0$;\\
    \item[(iii)]
        $u_{n}(x)\to u(x)$ in $\Omega$ and $\displaystyle \lim_{n\to +\infty}\rho_1(u_{n})=\rho_1(u)$.
    \end{enumerate}
\end{proposition}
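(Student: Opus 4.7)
The plan is to prove the equivalence chain (i) $\Leftrightarrow$ (ii) $\Leftrightarrow$ (iii). The equivalence (i) $\Leftrightarrow$ (ii) follows almost directly from Proposition \ref{pr1}: if $\|u_n - u\|_{p(\cdot),\Omega} \to 0$, then eventually $\|u_n - u\|_{p(\cdot),\Omega} < 1$, and by part (iii) of Proposition \ref{pr1}, $\rho_1(u_n - u) \leq \|u_n - u\|_{p(\cdot),\Omega}^{p^-} \to 0$. Conversely, $\rho_1(u_n - u) \to 0$ eventually forces $\rho_1(u_n - u) < 1$, which by Proposition \ref{pr1}(i) gives $\|u_n - u\|_{p(\cdot),\Omega} < 1$, and then Proposition \ref{pr1}(iii) yields $\|u_n - u\|_{p(\cdot),\Omega}^{p^+} \leq \rho_1(u_n - u) \to 0$.

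For (ii) $\Rightarrow$ (iii), I would first derive the pointwise/in-measure convergence $u_n \to u$ in $\Omega$ via a Chebyshev-type estimate of the form
\begin{equation*}
\meas\{x \in \Omega : |u_n(x) - u(x)| > \epsilon\} \leq \max\bigl(\epsilon^{-p^-}, \epsilon^{-p^+}\bigr)\,\rho_1(u_n - u),
\end{equation*}
which tends to $0$ for each fixed $\epsilon > 0$. For $\rho_1(u_n) \to \rho_1(u)$, the plan is to apply the elementary inequality $\bigl||a|^p - |b|^p\bigr| \leq p\max(|a|,|b|)^{p-1}|a-b|$ together with the variable-exponent H\"older inequality \eqref{Hol1}; boundedness of $\{u_n\}$ in $L^{p(\cdot)}(\Omega)$ is a consequence of the already established equivalence (ii) $\Leftrightarrow$ (i).

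The hardest implication is (iii) $\Rightarrow$ (ii). The plan is to adapt a Brezis--Lieb-type argument to the modular $\rho_1$: after extracting an a.e.\ convergent subsequence from the in-measure convergence, one uses a splitting estimate of the form $\bigl||a+b|^{p(x)} - |b|^{p(x)}\bigr| \leq \epsilon|b|^{p(x)} + C_\epsilon|a|^{p(x)}$ together with Vitali's convergence theorem to show
\begin{equation*}
\rho_1(u_n) - \rho_1(u_n - u) - \rho_1(u) \to 0.
\end{equation*}
Combined with the assumption $\rho_1(u_n) \to \rho_1(u)$, this yields $\rho_1(u_n - u) \to 0$ along the subsequence, and a standard subsequence-of-subsequence argument then upgrades the conclusion to the full sequence. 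The main obstacle is precisely this Brezis--Lieb adaptation in the variable-exponent setting: the uniform integrability of $\{|u_n|^{p(x)}\}$ must be handled carefully because $p(x)$ is not constant, and the familiar constant-exponent proof does not transfer verbatim; one must split the domain according to whether $|u_n - u|$ is greater or smaller than $1$ and treat the two regimes separately using the two sides of Proposition \ref{pr1}.
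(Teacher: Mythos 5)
The paper does not actually prove Proposition~\ref{pr2}; it cites it from R\u{a}dulescu \& Repov\v{s} \cite[p.~11]{Radulescu-Repovs-2015} and gives no argument. So the review below assesses your proposal on its own, rather than against a proof in this paper.

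Your overall plan is correct and follows the standard route used in the literature for this result. The equivalence (i)~$\Leftrightarrow$~(ii) via Proposition~\ref{pr1} is right and is the cleanest part. The direction (ii)~$\Rightarrow$~(iii) via a Chebyshev-type bound
\begin{equation*}
\meas\{|u_n-u|>\eps\}\leq \max\bigl(\eps^{-p^-},\eps^{-p^+}\bigr)\,\rho_1(u_n-u)
\end{equation*}
for convergence in measure, and the mean-value inequality together with the variable-exponent H\"older inequality \eqref{Hol1} and the boundedness of $(u_n)$ in $L^{p(\cdot)}$ for $\rho_1(u_n)\to\rho_1(u)$, is sound. The direction (iii)~$\Rightarrow$~(ii) via a Brezis--Lieb argument on an a.e.\ convergent subsequence, plus the subsequence-of-subsequence closure, is also the right approach; the boundedness of $\rho_1(u_n-u)$ you need there comes from $\rho_1(u_n-u)\leq 2^{p^+}\bigl(\rho_1(u_n)+\rho_1(u)\bigr)$ and the assumed convergence of $\rho_1(u_n)$.

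Two small clarifications. First, for the three-way equivalence to hold, ``$u_n(x)\to u(x)$ in $\Omega$'' must be read as convergence in measure, not pointwise a.e.\ for the full sequence; otherwise (ii)~$\Rightarrow$~(iii) is false (the ``typewriter'' sequence converges in modular but not a.e.). Your Chebyshev step delivers exactly convergence in measure, so the argument is consistent once the reading is fixed, but the hedge ``pointwise/in-measure'' should be resolved. Second, your final remark overstates the difficulty of adapting Brezis--Lieb to the variable-exponent setting: the splitting inequality $\bigl||a+b|^{p}-|b|^{p}\bigr|\leq \eps|b|^{p}+C_\eps|a|^{p}$ can be made uniform in $p\in[p^-,p^+]$, so the constant-exponent proof transfers with $C_\eps=C_\eps(p^-,p^+)$ and there is no genuine need to split the domain according to the size of $|u_n-u|$. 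With this uniform $C_\eps$, the classical truncation $W_{n,\eps}=\bigl[\,\bigl||u_n|^{p(x)}-|u_n-u|^{p(x)}-|u|^{p(x)}\bigr|-\eps|u_n-u|^{p(x)}\,\bigr]^+$ is dominated by $(C_\eps+1)|u|^{p(x)}\in L^1$ and tends to $0$ a.e., so dominated convergence alone finishes the step; invoking Vitali also works but is not needed.
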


In what follows, we recall Lemma A.1 of Giacomoni, Tiwari \& Warnault \cite{21} for variable
exponent Lebesgue spaces which is necessary to verify the coercivity
in Section $4$. A related property can be found in Edmunds \& R\'akosnik \cite[Lemma 2.1]{Edm}.

\begin{lemma}\label{L2.2}
Assume that $h_{1}\in L^{\infty}(\Omega)$ such that $h_{1}\geq0$ and
$h_{1}\not\equiv0$ a.e. in $\Omega$. Let
$h_{2}:\Omega\rightarrow\mathbb{R}$ be a measurable function such
that $h_{1}h_{2}\geq1$ a.e. in $\Omega$. Then for any $u\in
L^{h_{1}(\cdot)h_{2}(\cdot)}(\Omega)$,
\begin{equation*}
\||u|^{h_{1}(\cdot)}\|_{h_{2}(\cdot)}\leq\|u\|^{h_1^-}_{h_1(\cdot)h_2(\cdot)}+\|u\|^{h_1^+}_{h_1(\cdot)h_2(\cdot)}.
\end{equation*}
\end{lemma}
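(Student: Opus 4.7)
The plan is to apply the definition of the Luxemburg norm together with the modular estimate from Proposition~\ref{pr1}. First I will set $\beta:=\|u\|_{h_1(\cdot)h_2(\cdot)}$ and $v:=|u|^{h_1(\cdot)}$; the case $u\equiv 0$ being trivial, we may assume $\beta>0$, and put $\mu:=\beta^{h_1^-}+\beta^{h_1^+}$. By the very definition of the Luxemburg norm, the claim $\|v\|_{h_2(\cdot)}\leq\mu$ reduces to verifying the single modular estimate
\begin{equation*}
\into\left(\frac{v(x)}{\mu}\right)^{h_2(x)}dx \;=\; \into \mu^{-h_2(x)}\,|u(x)|^{h_1(x)h_2(x)}\,dx \;\leq\; 1.
\end{equation*}

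The heart of the argument is the pointwise bound $\beta^{h_1(x)}\leq\mu$ for a.e. $x\in\Omega$, which I would obtain by splitting on whether $\beta\geq 1$ or $\beta<1$: in the first case $t\mapsto\beta^{t}$ is nondecreasing, hence $\beta^{h_1(x)}\leq\beta^{h_1^+}\leq\mu$; in the second it is nonincreasing, hence $\beta^{h_1(x)}\leq\beta^{h_1^-}\leq\mu$. Since the hypotheses $h_1 h_2\geq 1$ and $h_1\in L^\infty(\Omega)$ force $h_2>0$ almost everywhere, raising to the power $h_2(x)$ preserves the inequality and yields $\mu^{-h_2(x)}\leq\beta^{-h_1(x)h_2(x)}$ a.e.

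To finish, I would invoke the positive homogeneity of the Luxemburg norm to get $\|u/\beta\|_{h_1(\cdot)h_2(\cdot)}=1$, which by Proposition~\ref{pr1}(i) is equivalent to $\into|u(x)|^{h_1(x)h_2(x)}\beta^{-h_1(x)h_2(x)}\,dx=1$. Integrating the pointwise bound from the previous paragraph then gives
\begin{equation*}
\into\mu^{-h_2(x)}\,|u|^{h_1(x)h_2(x)}\,dx \;\leq\; \into\beta^{-h_1(x)h_2(x)}\,|u|^{h_1(x)h_2(x)}\,dx \;=\; 1,
\end{equation*}
so the required modular estimate holds and $\|v\|_{h_2(\cdot)}\leq\mu=\beta^{h_1^-}+\beta^{h_1^+}$. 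The only genuine obstacle is the case distinction yielding $\beta^{h_1(x)}\leq\mu$; everything else is the standard modular--norm bookkeeping encoded in Proposition~\ref{pr1}.
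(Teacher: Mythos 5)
Your proof is correct. The paper itself gives no proof of this lemma---it simply cites Lemma A.1 of Giacomoni, Tiwari and Warnault \cite{21}---but your argument is precisely the standard one from that source: reduce the claim to a single modular estimate for the Luxemburg norm with $\mu=\beta^{h_1^-}+\beta^{h_1^+}$, obtain the pointwise bound $\beta^{h_1(x)}\leq\mu$ from the dichotomy $\beta\geq1$ versus $\beta<1$ via the monotonicity of $t\mapsto\beta^t$, use $h_1h_2\geq1$ together with $h_1\in L^\infty(\Omega)$ to conclude $h_2>0$ a.e. so that raising to the power $h_2(x)$ is legitimate, and close with the norm--modular relation $\rho_1(u/\beta)\leq1$ from Proposition~\ref{pr1}. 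The only tacit assumption worth flagging is that Proposition~\ref{pr1} is formulated for exponents with $p^+<\infty$, so the step $\rho_1(u/\beta)=1$ presupposes $h_1h_2$ bounded, which holds in all the paper's applications.
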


Next, we define the variable exponent  Sobolev space
$$
W^{1,p(\cdot)}(\Omega)=\{u\in L^{p(\cdot)}(\Omega):\;|\nabla u|\in
L^{p(\cdot)} (\Omega) \}.
$$
On $W^{1,p(\cdot)}(\Omega)$ we may consider one of the following
equivalent norms
$$
\|u\|_{W}=\|u\|_{p(\cdot)}+\|\nabla u\|_{p(\cdot)}
$$
or
$$
\|u\|_{W}=\inf\left\{\mu>0;\;\int_\Omega\left(\left| \frac{\nabla
u(x)}{\mu}\right|^{p(x)}+\left|
\frac{u(x)}{\mu}\right|^{p(x)}\right)\;dx\leq 1\right\}\,.
$$
We also define $W_0^{1,p(\cdot)}(\Omega)$ as the closure of
$C_0^\infty(\Omega)$ in $W^{1,p(.)}(\Omega)$.

 Next, we recall an
embedding result regarding variable exponent Sobolev spaces, see Fan, Shen \& Zhao
\cite{fan}.
\begin{theorem}\label{binj}
If $\Omega \subset \mathbb{R}^{N}$ is bounded domain and $p(x)\in
C(\overline{\Omega})$, then for any measurable function $q(x)$
defined in $\Omega$ with
$$p(x)\leq q(x), \ \ \mbox{a.e} \ \ x\in \overline{\Omega} \ \ \mbox{and} \ \ \displaystyle \essinf_{x\in \overline{\Omega}}(p^{\ast}(x)-q(x))>0, \ \  (q^{\ast}(\cdot)=\frac{q(\cdot)}{q(\cdot)-1})$$
there is a  compact embedding
$W^{1,p(\cdot)}_{0}(\Omega)\hookrightarrow L^{q(\cdot)}(\Omega).$
\end{theorem}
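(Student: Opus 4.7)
My plan is to reduce the variable-exponent compact embedding to the classical Rellich--Kondrachov theorem, combining two ingredients: a continuous Sobolev-type embedding $\Wpzero{p(\cdot)}\hookrightarrow L^{p^{\ast}(\cdot)}(\Omega)$ with $p^{\ast}(x)=Np(x)/(N-p(x))$ (valid because $\Omega$ is bounded and $p\in C_+(\close)$, with the borderline case $p^+\geq N$ handled by replacing $p^{\ast}$ with an arbitrarily large constant exponent chosen in terms of $q^+$), and the strictly positive gap $\delta:=\essinf_{x\in\close}(p^{\ast}(x)-q(x))>0$ that lets me interpolate between variable and constant exponents. The gap will be used to produce equiintegrability, which together with a.e.\ convergence yields modular convergence in $L^{q(\cdot)}(\Omega)$.

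First I would let $\{u_n\}$ be bounded in $\Wpzero{p(\cdot)}$ and extract a weakly convergent subsequence; after translating, assume $u_n\rightharpoonup 0$. The continuous Sobolev embedding gives a uniform bound $\into |u_n|^{p^{\ast}(x)}\,dx\leq M$. Since $\Omega$ is bounded, \eqref{inj1} applied to $u_n$ and to $|\nabla u_n|$ supplies the continuous embedding $\Wpzero{p(\cdot)}\hookrightarrow \Wpzero{p^-}$, and the classical Rellich--Kondrachov theorem for the constant exponent $p^-$ produces a further subsequence converging strongly to $0$ in $L^{p^-}(\Omega)$, hence, extracting once more, $u_n\to 0$ almost everywhere on $\Omega$.

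By Proposition \ref{pr2} it suffices to show the modular convergence $\rho_1(u_n)=\into |u_n|^{q(x)}\,dx\to 0$, and I would obtain this via Vitali's theorem. The gap $\delta$ makes the ratios $r(\cdot):=p^{\ast}(\cdot)/q(\cdot)$ and $r'(\cdot):=p^{\ast}(\cdot)/(p^{\ast}(\cdot)-q(\cdot))$ bounded functions with $r^->1$ and $(r')^->1$, so the variable-exponent H\"older inequality \eqref{Hol1} applied on a measurable set $E\subset\Omega$ yields
\begin{equation*}
\int_E |u_n|^{q(x)}\,dx \leq C\,\bigl\||u_n|^{q(\cdot)}\bigr\|_{r(\cdot),\Omega}\,\|\chi_E\|_{r'(\cdot),\Omega}.
\end{equation*}
Lemma \ref{L2.2} with $h_1=q(\cdot)$ and $h_2=r(\cdot)$ (so that $h_1h_2=p^{\ast}(\cdot)$) together with the $L^{p^{\ast}(\cdot)}$-bound dominates the first factor uniformly in $n$, while Proposition \ref{pr1} applied to $\chi_E$ gives $\|\chi_E\|_{r'(\cdot),\Omega}\leq|E|^{1/(r')^+}$ for $|E|$ small. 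This is the desired equiintegrability, and Vitali's theorem combined with the a.e.\ convergence closes the argument.

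The principal obstacle will be establishing the continuous Sobolev embedding $\Wpzero{p(\cdot)}\hookrightarrow L^{p^{\ast}(\cdot)}(\Omega)$ under the bare hypothesis $p\in C_+(\close)$; the clean route goes through a finite covering of $\close$ by small balls on which $p$ varies by less than a fraction of $\delta$, reducing to the classical constant-exponent Sobolev inequality on each piece and patching via a partition of unity. Once this embedding is secured, and provided one exercises care in bookkeeping the exponents $r(\cdot),r'(\cdot)$ (both of which are only measurable when $q$ is), the equiintegrability calculation and the passage from modular to norm convergence are routine applications of the tools recalled in Section~\ref{section_preliminaries}.
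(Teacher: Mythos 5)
This theorem is stated in the paper without proof --- it is recalled verbatim from Fan, Shen \& Zhao \cite{fan} --- so there is no internal argument of the paper to compare yours against. Evaluated on its own terms, your overall plan (a continuous variable-exponent Sobolev embedding giving a uniform modular bound, a.e.\ convergence on a subsequence via Rellich--Kondrachov for the constant exponent $p^-$, equiintegrability of $|u_n|^{q(\cdot)}$ via the variable-exponent H\"older inequality together with Lemma~\ref{L2.2}, and Vitali's theorem plus Proposition~\ref{pr2}) is the standard route and is sound in outline.

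There is, however, one imprecision that needs repair. The \emph{critical} embedding $\Wpzero{p(\cdot)}\hookrightarrow L^{p^\ast(\cdot)}(\Omega)$ is in general \emph{not} available under the bare hypothesis $p\in C_+(\close)$: one needs a modulus of continuity condition such as log-H\"older continuity, and the covering-and-patching construction you sketch does not give it. On a ball $B_i$ where $p$ oscillates by $\eta$ you control $\|u\|_{L^{(p_i^-)^\ast}(B_i)}$, and $(p_i^-)^\ast \le p^\ast(x)$ throughout $B_i$, so patching yields only a \emph{subcritical} embedding $\Wpzero{p(\cdot)}\hookrightarrow L^{p^\ast(\cdot)-\eta'}(\Omega)$ with $\eta'$ controlled by the mesh. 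Consequently the uniform bound $\into|u_n|^{p^\ast(x)}\,dx\le M$ is not justified as written. The fix is routine precisely because of the gap $\delta=\essinf(p^\ast-q)>0$: choose a bounded measurable exponent $s(\cdot)$ with $q(\cdot)+\delta/3\le s(\cdot)\le p^\ast(\cdot)-\delta/3$ (truncating $s$ at a large constant where $p\ge N$), obtain the continuous embedding $\Wpzero{p(\cdot)}\hookrightarrow L^{s(\cdot)}(\Omega)$ from your covering argument with $\eta'<\delta/3$, and run the H\"older--Vitali computation with $r(\cdot)=s(\cdot)/q(\cdot)$ and $r'(\cdot)=s(\cdot)/(s(\cdot)-q(\cdot))$. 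Then $r^-\ge 1+\delta/(3s^+)>1$ and $(r')^+\le 3s^+/\delta<\infty$, and Lemma~\ref{L2.2} applies with $h_1=q$, $h_2=r$, $h_1h_2=s$ exactly as before. With $s(\cdot)$ in place of $p^\ast(\cdot)$ throughout, the argument closes.
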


\section{Double phase Baouendi-Grushin operators}
In this section we prove new results concerning the new
Baouendi-Grushin operator defined in \eqref{baouendi}.

\smallskip
 First, we
give the hypotheses on continuous functions
$a,K,G:\mathbb{R}^{N}\rightarrow \mathbb{R}$.\\
$(A)$ $a(.)$ is a continuous function such that
$$a(x)>0 \ \mbox{for all}\ x\in \mathbb{R}^{N}.$$
 $(G)$ $G$ is a function of class $C^{1}$ and that $$G(x,y)\in
(2,N) \  \mbox{for every}\ (x,y)\in\mathbb{R}^{N}.$$ {   We need
$G>2$ in the proof of Lemma $4.5$, that is, in the first application.  So, it is possible to include the case $G=2$ if we do another kind of applications. }\\
 $(K)$ $K\in L^{\infty}(\mathbb{R}^{N})$,
$K(x)>0$ for all $x\in \mathbb{R}^{N}$ and if $(A_n)\subset
\mathbb{R}^{N}$ is a sequence of Borel sets such that the Lebesgue
measure $|A_n|\leq R,$ for all $n\in \mathbb{N}$ and some $r>0$,
then
$$\displaystyle \lim _{n\rightarrow +\infty}\int_{A_n\cap B^{c}_{r}(0)}K(x)dx=0.$$

In order to treat problem \eqref{imain}, let us consider the space:
\begin{align*}
D^{1,G}_{a}(\mathbb{R}^{N})&=\{u:\mathbb{R}^{N} \rightarrow
\mathbb{R}, \ \ u\in L^{G^{\ast}}(\mathbb{R}^{N})  \ \ and\\
&\int_{\mathbb{R}^{N}}(|\nabla_{x}u|^{G(x,y)}+a(x)|\nabla_{y}u|^{G(x,y)})dxdy<+\infty\}
\end{align*}
endowed with the norm
 $$
  \|u\|_{D}=\left\|\nabla
_{x}u\right\|_{G(\cdot,\cdot)}+\left\| a(x)^{\frac{1}
{G(\cdot,\cdot)}} \nabla_{y}u\right\|_{G(\cdot,\cdot)},\  \mbox{for all}\
u\in X.
$$
This permits us to construct a suitable space
$$X=D^{1,G(\cdot)}_{a}(\mathbb{R}^{N}) \bigcap L^{G(\cdot)}(\mathbb{R}^{N}),$$
endowed with the norm
$$\|u\|_{X}=\|u\|_{D}+\|u\|_{G(\cdot)}\ \mbox{for all}\ u\in X.$$
\begin{remark}
Note that the norm $\|\cdot\|_{X}$ on $X$ is equivalent to
\begin{align}\label{equivalent_norm}
  \begin{split}
    & \|u\|\\
    &=\inf\left\{\mu\geq 0 \ \bigg| \ \rho\left(\frac{u}{\mu}\right)\leq 1\right\}\\
    &=\inf\left\{\mu\geq 0 \ \bigg | \ \displaystyle \int_{\mathbb{R}^{N}}\left[\left|\nabla_{x}\left(\frac{u}{\mu}\right)\right|^{G(x,y)}+
     a(x)\left|\nabla_{y}\left(\frac{u}{\mu}\right)\right|^{G(x,y)}+\left(\frac{|u|}{\mu}\right)^{G(x,y)}\right]\,dx\,dy\leq
     1\right\},
  \end{split}
\end{align}
where \begin{equation} \rho(u)=\displaystyle
\int_{\mathbb{R}^{N}}\left[\left|\nabla_{x}u\right|^{G(x,y)}+
     a(x)\left|\nabla_{y}u\right|^{G(x,y)}+|u|^{G(x,y)}\right]\,dx\,dy.
\end{equation}
\end{remark}
From now on, we shall denote the duality pairing between $X$ and its dual
space $X^*$ by $\langle \cdot,\cdot\rangle_{X}$.\\
 The following
lemma will be helpful in the sequel.

\begin{lemma}\label{modular}
    Suppose that conditions $(A)$ and $(G)$ are satisfied. Let $u\in X$, then the following holds:
    \begin{enumerate}
    \item[(i)]
        For $u\neq 0$ we have: $\|u\|=a$ if and only if $\rho(\frac{u}{a})=1$;
    \item[(ii)]
        $\|u\|<1$ implies $\frac{\|u\|^{G^{+}}}{2^{\frac{1}{G^{+}-1}}}\leq \rho(u)\leq 2 \|u\|^{G^-}$;
    \item[(iii)]
        $\|u\|>1$ implies $\|u\|^{G^-}\leq \rho(u)\leq \|u\|^{G^+} $.
    \end{enumerate}
\end{lemma}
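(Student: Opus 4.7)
The entire lemma hinges on one analytic property of the map $\Phi_u(\mu):=\rho(u/\mu)$ on $(0,\infty)$ for $u\in X\setminus\{0\}$: it is continuous, strictly decreasing, with $\lim_{\mu\to 0^{+}}\Phi_u(\mu)=+\infty$ and $\lim_{\mu\to\infty}\Phi_u(\mu)=0$. Granting this, (i) is the intermediate value theorem combined with the definition of the Luxemburg infimum in \eqref{equivalent_norm}, and (ii)--(iii) then follow directly from the pointwise inequalities $t^{G^{-}}\le t^{G(x,y)}\le t^{G^{+}}$ when $t\ge 1$ and $t^{G^{+}}\le t^{G(x,y)}\le t^{G^{-}}$ when $0<t\le 1$, applied to $t=\|u\|$ and $v=u/\|u\|$, which satisfies $\rho(v)=1$ by (i).

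To establish the properties of $\Phi_u$: strict monotonicity is pointwise, since for $u(z)\neq 0$ each of $|\nabla_x u(z)|^{G(z)}/\mu^{G(z)}$, $a(x)|\nabla_y u(z)|^{G(z)}/\mu^{G(z)}$ and $|u(z)|^{G(z)}/\mu^{G(z)}$ is strictly decreasing in $\mu$. For continuity at any $\mu_0>0$, restrict $\mu\in[\mu_0/2,2\mu_0]$; the integrand of $\Phi_u(\mu)$ is then dominated by a constant depending only on $G^{\pm}$ and $\mu_0$ times the integrand of $\rho(u)$, which is integrable because $u\in X$, so dominated convergence gives continuity. The vanishing $\Phi_u(\mu)\to 0$ as $\mu\to\infty$ is dominated convergence again (pointwise the integrand tends to $0$, and for $\mu\ge 1$ it is bounded by the integrand of $\rho(u)$), while $\Phi_u(\mu)\to+\infty$ as $\mu\to 0^{+}$ follows from monotone convergence, using that $u\neq 0$ in $X$ forces $|u|^{G(z)}/\mu^{G(z)}\to+\infty$ on a set of positive measure. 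The intermediate value theorem then furnishes a unique $\mu^{*}\in(0,\infty)$ with $\Phi_u(\mu^{*})=1$, and strict monotonicity together with the definition of $\|u\|$ as an infimum identifies $\mu^{*}=\|u\|$.

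For (ii) and (iii), given $u\neq 0$ set $t=\|u\|$ and $v=u/t$, so that $\rho(v)=1$ by (i). The identity
\[
\rho(u)=\int_{\mathbb{R}^{N}} t^{G(x,y)}\bigl(|\nabla_x v|^{G(x,y)}+a(x)|\nabla_y v|^{G(x,y)}+|v|^{G(x,y)}\bigr)\,dx\,dy
\]
together with the pointwise bounds above yields $\|u\|^{G^{-}}\le\rho(u)\le\|u\|^{G^{+}}$ in case (iii) and $\|u\|^{G^{+}}\le\rho(u)\le\|u\|^{G^{-}}$ in case (ii); the looser constants $2^{-1/(G^{+}-1)}$ and $2$ stated in the lemma follow a fortiori, and the case $u=0$ in (ii) is trivial since both sides vanish. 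The only genuinely non-routine point is handling the unbounded domain $\mathbb{R}^{N}$ when verifying continuity and the two limits of $\Phi_u$; membership $u\in X$ supplies a single integrable envelope for all three pieces of the modular, which is precisely what dominated and monotone convergence require, so no truncation argument is necessary.
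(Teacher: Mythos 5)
Your proof is correct and, in fact, establishes strictly sharper bounds than those stated in the lemma. The paper itself gives no argument here, deferring to the companion paper \cite{Bahrouni-Radulescu-Winkert-2019}, so there is no in-text proof to compare against; what you wrote is the standard Luxemburg-norm argument applied to the modular $\rho$, carried out carefully enough to handle the unbounded domain. The one point genuinely worth checking in $\mathbb{R}^N$ — that $\mu \mapsto \rho(u/\mu)$ is continuous and has the right behaviour at $0^{+}$ and $+\infty$ — you handle correctly by observing that $u\in X$ already provides the single integrable envelope (the integrand of $\rho(u)$, up to a constant depending on the compact parameter window) required for dominated and monotone convergence; no exhaustion of $\mathbb{R}^N$ by balls is needed.

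One remark worth making explicit: your computation via $v=u/\|u\|$ and the identity $\rho(u)=\int t^{G(x,y)}(\cdots)$ with $\rho(v)=1$ actually yields $\|u\|^{G^{+}}\le\rho(u)\le\|u\|^{G^{-}}$ for $\|u\|<1$ and $\|u\|^{G^{-}}\le\rho(u)\le\|u\|^{G^{+}}$ for $\|u\|>1$, with no factor of $2$ or $2^{1/(G^{+}-1)}$ anywhere. The weaker constants in the lemma as stated therefore follow a fortiori, as you note. Those constants presumably originate in the cited reference from estimating the sum-norm $\|\cdot\|_X=\|u\|_D+\|u\|_{G(\cdot)}$ rather than the Luxemburg-type norm $\|\cdot\|$ of \eqref{equivalent_norm}; since the lemma is phrased in terms of $\|\cdot\|$, your tighter bounds are the natural ones and your argument is, if anything, cleaner than what the statement requires.

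Two very minor points of hygiene, not gaps: in the strict monotonicity step you should say explicitly that $u\neq 0$ in $X$ (hence $|u|>0$ on a set of positive measure, since $u\in L^{G(\cdot)}$) is what makes the decrease strict rather than merely non-strict; and it is worth recording that $\Phi_u(\mu)<\infty$ for every $\mu>0$ (immediate from $\Phi_u(\mu)\le \max(\mu^{-G^{-}},\mu^{-G^{+}})\rho(u)$), so that the intermediate value theorem is applied to a real-valued function.
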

\begin{proof}
The proof is similar to that in
\cite{Bahrouni-Radulescu-Winkert-2019}.
\end{proof}
\begin{lemma}\label{S+}
    Assume that the hypotheses of Lemma \ref{modular} are fulfilled. Then the following properties hold.
    \begin{enumerate}
    \item[(i)]
        The functional $\rho$ is of class $C^{1}$ and for all $u,v\in X$ we have
        \begin{align*}
        \langle \rho'(u),v\rangle_{X} &=\displaystyle \int_{\mathbb{R}^{N}}\left[\left|\nabla_{x}u\right|^{G(x,y)-2}\nabla_{x}u\nabla_{x}v
        +a(x)\left|\nabla_{y}u\right|^{G(x,y)-2} \nabla_{y}u
        \nabla_{y}v\right]\,dx\,dy\\
        &+\displaystyle \int_{\mathbb{R}^{N}} |u|^{G(z)-2}uv\, dz.
        \end{align*}
    \item[(ii)]
        The function $\rho': X\to X^*$ is coercive, that is, $\frac{\langle\rho'(u),u\rangle_{X}}{\|u\|_X}\to +\infty$ as $\|u\|_X\to +\infty$.
        \item[(iii)] $\rho'$
is a mapping of type $(\Ss_+$), that is, if $u_{n}\rightharpoonup u$
in $X$ and $\displaystyle \limsup_{n\to +\infty}\,\langle
\rho'(u_{n}),u_{n}-u\rangle_{X} \leq0$, then $u_{n}\to u$ in $X$.
    \end{enumerate}
\end{lemma}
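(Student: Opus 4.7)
My plan for the three parts follows the scheme of our earlier work \cite{Bahrouni-Radulescu-Winkert-2019}, adapted to the new $y$-block weighted by $a(x)$.

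For part (i), I would split $\rho$ into its three summands (the $\nabla_x$-piece, the weighted $\nabla_y$-piece, and the zero-order piece) and compute the G\^ateaux derivative of each by a pointwise mean-value estimate for $t\mapsto |t|^{G(z)}$, passing to the limit under the integral by dominated convergence. To upgrade to Fr\'echet $C^1$, continuity of $\rho'\colon X\to X^{*}$ reduces, via the H\"older-type inequality, to continuity in $L^{G'(\cdot)}(\R^{N})$ of the Nemytskii operators $w\mapsto |w|^{G(z)-2}w$ applied respectively to $\nabla_x u$, to $a(x)^{1/G(z)}\nabla_y u$, and to $u$; this is a standard consequence of Propositions \ref{pr1} and \ref{pr2}.

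Part (ii) is immediate once (i) is in hand. Testing $\rho'(u)$ against $u$ gives
\[
\langle \rho'(u),u\rangle_X = \int_{\R^{N}} \bigl[|\nabla_x u|^{G(z)} + a(x)|\nabla_y u|^{G(z)} + |u|^{G(z)}\bigr]\,dz = \rho(u).
\]
By Lemma \ref{modular}(iii), whenever $\|u\|_X>1$ we have $\rho(u)\geq \|u\|_X^{G^{-}}$, hence $\langle \rho'(u),u\rangle_X/\|u\|_X \geq \|u\|_X^{G^{-}-1}\to\infty$, because $G^{-}\geq 2>1$ by hypothesis $(G)$.

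Part (iii) is the substantive one. Suppose $u_n\rightharpoonup u$ in $X$ with $\limsup_n \langle \rho'(u_n), u_n-u\rangle_X\leq 0$. Step 1: weak convergence in $X$ transports to weak convergence of $\nabla_x u_n$, of $a(x)^{1/G(z)}\nabla_y u_n$, and of $u_n$ in $L^{G(\cdot)}(\R^{N})$; pairing these against the three ingredients of $\rho'(u)\in X^{*}$ gives $\langle \rho'(u), u_n-u\rangle_X\to 0$, whence $\limsup_n \langle \rho'(u_n)-\rho'(u),u_n-u\rangle_X\leq 0$. Step 2: the pointwise monotonicity $(|\xi|^{G(z)-2}\xi-|\eta|^{G(z)-2}\eta)\cdot(\xi-\eta)\geq 0$ makes the left-hand side a sum of three non-negative integrals, so each tends to $0$. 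Step 3: apply Simon's inequality pointwise in $z$ with exponent $p=G(z)>2$ to convert each vanishing monotonicity integral into modular convergence, and then invoke Proposition \ref{pr2} to promote modular to norm convergence in the three $L^{G(\cdot)}$ components, which together yield $\|u_n-u\|_X\to 0$ via the modular characterization of $\|\cdot\|_X$ in Lemma \ref{modular}.

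The main obstacle is Step 3 of part (iii): because the domain is all of $\R^{N}$ and the exponent $G(z)$ varies, Simon's inequality does not furnish a single universal constant, and the pointwise-to-integral passage has to be carried out carefully (e.g.\ by splitting $\R^{N}$ according to the sizes of $|\nabla_x u_n-\nabla_x u|$ etc.\ and using H\"older with $G(z)/2$ and its conjugate). A secondary subtlety is that $a$ is only assumed positive and may degenerate on compacta, which is exactly why the correct object to manipulate is $a(x)^{1/G(z)}\nabla_y u$ rather than $\nabla_y u$ itself.
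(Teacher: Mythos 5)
Your proposal is essentially correct and follows the same route the paper has in mind: the paper itself gives no argument for Lemma~\ref{S+} and merely says the proof is ``similar to that in \cite{Bahrouni-Radulescu-Winkert-2019},'' and that reference's argument is exactly the standard Nemytskii-continuity/G\^ateaux-derivative argument for (i), the identity $\langle\rho'(u),u\rangle_X=\rho(u)$ plus Lemma~\ref{modular} for (ii), and monotonicity plus Simon's inequality plus Proposition~\ref{pr2} for (iii). Two small remarks. First, Step~1 of part (iii) is even simpler than you suggest: once $\rho'(u)\in X^*$ is known from (i), the convergence $\langle\rho'(u),u_n-u\rangle_X\to 0$ is an immediate consequence of the definition of $u_n\rightharpoonup u$ in $X$, without transporting weak convergence to the individual $L^{G(\cdot)}$-components. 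Second, the ``main obstacle'' you flag in Step~3 is not actually an obstacle: for $p\geq 2$ Simon's inequality reads $(|\xi|^{p-2}\xi-|\eta|^{p-2}\eta)\cdot(\xi-\eta)\geq c_p|\xi-\eta|^p$ with $c_p$ depending continuously on $p$ and bounded away from $0$ as $p$ ranges over any compact subinterval of $[2,\infty)$; since hypothesis $(G)$ pins $G(z)$ inside $(2,N)$ and hence $2<G^-\leq G^+<N<\infty$, the constant $\inf_{p\in[G^-,G^+]}c_p>0$ is already universal, the passage from the pointwise estimate to the modular estimate is direct, and the proposed splitting of $\R^N$ by the size of $|\nabla_x u_n-\nabla_x u|$ is unnecessary. (Your ``secondary subtlety'' also needs a tweak: a continuous $a>0$ cannot degenerate on compacta, only at infinity, but the conclusion that one must carry $a(x)^{1/G(x,y)}\nabla_y u$ as a single object is nevertheless the right one.)
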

\begin{proof}
The proof is similar to that in Bahrouni, R\u adulescu \& Winkert
\cite{Bahrouni-Radulescu-Winkert-2019}.
\end{proof}
Now, we establish the following compactness result.
\begin{lemma}\label{compactbound}
Assume that $(A)$ and $(G)$ hold. Then
$D^{1,G}_{a}(\mathbb{R}^{N})$ is compactly embedded in
$L^{s(\cdot)}_{loc}(\mathbb{R}^{N})$, for every $s(\cdot)\in
(1,G^{\ast}(\cdot))$.
\end{lemma}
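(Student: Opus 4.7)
The strategy is to reduce the local compactness on $\mathbb{R}^N$ to the known compact embedding for variable-exponent Sobolev spaces on bounded domains, namely Theorem \ref{binj}. The bridge between the two settings is hypothesis $(A)$: on every bounded set the coefficient $a(x)$ is bounded below by a positive constant, so the degenerate $\nabla_y$-part of the norm on $D^{1,G}_a(\mathbb{R}^N)$ controls the classical $\|\nabla_y u\|_{G(\cdot,\cdot)}$ on that set.

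First, I would fix an arbitrary bounded open set $\Omega\subset\mathbb{R}^N$ with smooth boundary, and denote by $\pi_x(\overline\Omega)\subset\mathbb{R}^n$ its compact projection onto the first factor. By $(A)$ there exists $a_\Omega:=\min_{x\in\pi_x(\overline\Omega)} a(x)>0$. Combined with the modular-norm estimates of Lemma \ref{modular} and Proposition \ref{pr1}, this yields a constant $C_\Omega>0$ such that
\begin{equation*}
\int_\Omega\bigl(|\nabla_x u|^{G(x,y)}+|\nabla_y u|^{G(x,y)}\bigr)\,dx\,dy\leq C_\Omega\bigl(\|u\|_D^{G^-}+\|u\|_D^{G^+}\bigr)
\end{equation*}
for every $u\in D^{1,G}_a(\mathbb{R}^N)$. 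Since $u\in L^{G^\ast(\cdot)}(\mathbb{R}^N)$, the embedding \eqref{inj1} applied on the bounded set $\Omega$ gives $u\in L^{G(\cdot)}(\Omega)$. Hence the restriction map $D^{1,G}_a(\mathbb{R}^N)\to W^{1,G(\cdot)}(\Omega)$ is well defined and continuous.

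Next, I would localize via a cutoff to bring the picture into the framework of Theorem \ref{binj}, which is stated for $W^{1,p(\cdot)}_0$. Choose a bounded smooth domain $\Omega'\Supset\Omega$ and $\eta\in C_c^\infty(\Omega')$ with $\eta\equiv 1$ on $\Omega$. A direct computation using the previous bound shows that $u\mapsto\eta u$ is continuous from $D^{1,G}_a(\mathbb{R}^N)$ into $W^{1,G(\cdot)}_0(\Omega')$. Because $G$ is continuous and $s(x,y)<G^\ast(x,y)$ pointwise on $\overline{\Omega'}$, the compactness of $\overline{\Omega'}$ forces $\essinf_{\overline{\Omega'}}(G^\ast-s)>0$, so Theorem \ref{binj} applies and yields the compact embedding $W^{1,G(\cdot)}_0(\Omega')\hookrightarrow L^{s(\cdot)}(\Omega')$.

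Putting everything together: if $u_n\rightharpoonup u$ in $D^{1,G}_a(\mathbb{R}^N)$, then by continuity $\eta u_n\rightharpoonup \eta u$ in $W^{1,G(\cdot)}_0(\Omega')$, hence $\eta u_n\to \eta u$ in $L^{s(\cdot)}(\Omega')$; restricting to $\Omega$ (where $\eta\equiv 1$) gives $u_n\to u$ in $L^{s(\cdot)}(\Omega)$. As $\Omega$ was an arbitrary bounded subset, the compact embedding into $L^{s(\cdot)}_{\mathrm{loc}}(\mathbb{R}^N)$ follows. The only delicate point I expect is verifying cleanly that $\eta u\in W^{1,G(\cdot)}_0(\Omega')$ and that the product rule estimates behave well under the variable exponent modular; this is routine once one uses Proposition \ref{pr1} and Proposition \ref{pr2} to pass between modular and norm estimates, together with the uniform lower bound on $a$ on $\pi_x(\overline{\Omega'})$.
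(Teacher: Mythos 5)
Your proposal is correct and rests on the same core mechanism as the paper's proof: hypothesis $(A)$ gives a uniform lower bound $a(x)\geq a_\Omega>0$ on any bounded set, so the weighted $\nabla_y$-term controls the unweighted one locally, and you can then invoke the compact variable-exponent Sobolev embedding of Theorem \ref{binj}. The implementation differs in two ways, both of which are actually tidier than the paper. First, the paper restricts $u_n$ directly to $B(0,R)$ and then speaks of boundedness and weak convergence in $W^{1,G(\cdot)}_0(B(0,R))$, even though restrictions of globally defined functions need not vanish on $\partial B(0,R)$; Theorem \ref{binj} is stated for $W^{1,p(\cdot)}_0$, so one either needs a version of the compact embedding for $W^{1,p(\cdot)}(\Omega)$ on a smooth bounded domain, or the cutoff you use. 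Your cutoff $\eta\in C_c^\infty(\Omega')$ with $\eta\equiv 1$ on $\Omega$ lands $\eta u$ legitimately in $W^{1,G(\cdot)}_0(\Omega')$ (the lower-order term $u\nabla\eta$ is handled exactly as you say, via $L^{G^\ast(\cdot)}(\Omega')\hookrightarrow L^{G(\cdot)}(\Omega')$ from \eqref{inj1}). Second, to identify the weak limit, the paper runs a proof by contradiction that extracts a subsequence converging weakly in $W^{1,G}$ to some $\overline{u}\neq u$, passes through the compact embedding, and tests against $\varphi\in C_0^\infty$ to reach a contradiction; your argument replaces all of this by the one-line observation that $u\mapsto\eta u$ is a bounded linear map and therefore carries $u_n\rightharpoonup u$ to $\eta u_n\rightharpoonup \eta u$. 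The two proofs thus buy the same conclusion, but your version avoids the $W^{1,G}_0$ ambiguity and the test-function detour. One small point to make explicit in a final write-up: "compactly embedded" is a statement about bounded sequences, so you should open with "let $(u_n)$ be bounded in $D^{1,G}_a(\mathbb{R}^N)$ and, by reflexivity, pass to a weakly convergent subsequence" rather than starting from an assumed weak convergence; the paper does the same implicitly.
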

\begin{proof}
Let $(u_{n})$ be an arbitrary bounded sequence in
$D^{1,G}_{a}(\mathbb{R}^{N})$. Fix $R>0$, $s(\cdot)\in (1,
G^{\ast}(\cdot))$, and set $B(0,R)=\{x\in \mathbb{R}^{N}, |x|\leq
R\}$.\\  We note that $u_n\rightharpoonup u$
 weakly in $L^{G^{\ast}(\cdot)}(\mathbb{R}^{N})$. Thus, for every $\varphi  \in
 C_{0}^{\infty}(\mathbb{R}^{N})$, one has
 \begin{equation}\label{com1}
\displaystyle \lim_{n\rightarrow +\infty}\int_{\mathbb{R}^{N}} u_n
\varphi dx=\int_{\mathbb{R}^{N}} u \varphi dx.
 \end{equation}
\textbf{Claim}. We prove that $u_n\rightharpoonup u$ in
$W^{1,G(\cdot)}_{0}(B(0,R))$. Indeed, denote by
$u\restriction{B(0,R)}$ the restriction of $u$ to $B(0,R)$ and
suppose that $(u_n)$ does not
converge to $u\restriction{B_R}$ weakly in $W^{1,G}_{0}(B(0,R))$. \\
By condition $(A)$, there exists $x_{0}\in B(0,R)$ such that
$$a(x)\geq a(x_0)>0, \ \ \mbox{for all} \ \ x\in B(0,R),$$
and so $(u_n)$  is bounded in $ W^{1,G}_{0}(B(0,R))$. Therefore,
there exist a subsequence $(u_{n_{k}})$ and $\overline{u}\in
W^{1,G}(B(0,R))$, with $\overline{u}\neq u\restriction{B_R}$, such
that $u_{n_{k}} \rightharpoonup \overline{u}$ weakly in
$W^{1,G}_{0}(B(0,R))$. Invoking Theorem \ref{binj}, $u_{n_{k}}
\rightarrow \overline{u}$ strongly in $L^{s(\cdot)}(B(0,R))$. Then,
taking into account \eqref{com1}, we obtain
\begin{equation*}
\int_{B(0,R)} u \varphi dx=\displaystyle \lim_{k\rightarrow
+\infty}\int_{B(0,R)} u_{n_{k}} \varphi dx=\int_{B(0,R)}
\overline{u} \varphi dx,
 \end{equation*}
for every $\varphi \in C_{0}^{\infty}(B(0,R))$. This implies that
$u(x)=\overline{u}(x)$ for almost all $x\in B(0,R)$, against the
fact that $\overline{u}\neq u\restriction{B_R}$. This proves the
claim. Hence $(u_n)$ weakly converges to $u\restriction{B_R}$ in
$W^{1,G}_{0}(B(0,R))$. Applying Theorem \ref{binj} again, $(u_n)$
strongly converges to $u$ in $L^{s(\cdot)} (B(0,R))$. This completes the
proof of Lemma \ref{compactbound}.
\end{proof}

Now, we are ready to prove our compact embedding result in the whole
space $\mathbb{R}^{N}$. Let us define, for every $s(\cdot)\in
C_{+}(\mathbb{R}^{N})$, the following Lebesgue space

$$L^{s(\cdot)}_{K}(\mathbb{R}^{N})=\{u:\mathbb{R}^{N} \rightarrow \mathbb{R}, \ \ \mbox{u is
measurable and} \ \ \int_{\mathbb{R}^{N}}
K(z)|u|^{s(z)}dz<+\infty\}.$$
\begin{proposition}\label{compact}
 Let $(A)$, $(G)$ and $(K)$ be
satisfied. Then $X$ is compactly embedded in
$L^{s(\cdot)}_{K}(\mathbb{R}^{N})$, for every $s(\cdot)\in
(G(\cdot),G^{\ast}(\cdot))$.
\end{proposition}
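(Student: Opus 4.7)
The strategy is to take a bounded sequence $(u_n) \subset X$, extract a subsequence $u_n \rightharpoonup u$ weakly in $X$, and upgrade this to strong convergence in $L^{s(\cdot)}_K(\mathbb{R}^N)$. By the reflexivity of $X$ and the continuous embeddings $X \hookrightarrow L^{G(\cdot)}(\mathbb{R}^N)$ and $X \hookrightarrow L^{G^{\ast}(\cdot)}(\mathbb{R}^N)$ (the latter built into the definition of $D^{1,G}_a$), we may assume weak convergence in both target Lebesgue spaces along the same subsequence; in particular $\|u_n - u\|_{G(\cdot)}$ and $\|u_n - u\|_{G^{\ast}(\cdot)}$ stay uniformly bounded. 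Applying Proposition \ref{pr2} to the weighted modular reduces the desired strong convergence in $L^{s(\cdot)}_K(\mathbb{R}^N)$ to the modular statement
$$\int_{\mathbb{R}^N} K(z)\,|u_n(z) - u(z)|^{s(z)}\,dz \longrightarrow 0.$$

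I would split this integral, for a parameter $r>0$ to be tuned, into a \emph{ball part} over $B_r(0)$ and a \emph{tail part} over $B_r^c(0)$. The ball part is straightforward: since $K \in L^\infty(\mathbb{R}^N)$ and $s(\cdot)\in(1,G^{\ast}(\cdot))$, Lemma \ref{compactbound} gives $u_n \to u$ strongly in $L^{s(\cdot)}(B_r(0))$, so the ball contribution tends to $0$ as $n \to \infty$ for each fixed $r$. For the tail, exploiting $G(z) < s(z) < G^{\ast}(z)$, I would use the pointwise majorization
$$|u_n - u|^{s(z)} \leq |u_n - u|^{G^{\ast}(z)}\chi_{\{|u_n - u|>1\}} + |u_n - u|^{G(z)}\chi_{\{|u_n - u|\leq 1\}}$$
to break the tail into two contributions. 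On the super-level set $A_n := \{|u_n - u|>1\}$, Chebyshev's inequality combined with the uniform $L^{G^{\ast}(\cdot)}$-bound shows $|A_n| \leq R$ uniformly in $n$, so hypothesis $(K)$ forces $\int_{A_n \cap B_r^c(0)} K\,dz \to 0$; a further truncation at height $M$ and a second Chebyshev estimate in $L^{G^{\ast}(\cdot)}$ absorb the factor $|u_n - u|^{G^{\ast}(z)}$. On $\{|u_n - u|\leq 1\}$, I would apply the generalized H\"older inequality \eqref{Hol1} with the variable-exponent pair $G^{\ast}(\cdot)/G(\cdot)$ and its conjugate, together with Lemma \ref{L2.2}, to separate a factor controlled by $\|u_n - u\|_{G^{\ast}(\cdot)}$ from a factor involving $K$ over $B_r^c(0)$ whose smallness is again supplied by $(K)$.

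The main obstacle is executing the tail estimate rigorously in the variable-exponent setting: since the super-level set $A_n$ depends on $n$, hypothesis $(K)$ must be invoked on an $n$-dependent family of Borel sets of uniformly bounded measure, and the variable exponents entering H\"older must be matched carefully to the available $L^{G(\cdot)}\cap L^{G^{\ast}(\cdot)}$-bounds. Once uniform tail decay is secured, the standard diagonal procedure concludes the proof: one first chooses $r$ large to control the tail uniformly in $n$, then sends $n \to \infty$ to annihilate the ball part, obtaining the modular convergence and hence $u_n \to u$ in $L^{s(\cdot)}_K(\mathbb{R}^N)$.
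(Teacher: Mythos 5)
Your ball/tail decomposition, the use of Lemma~\ref{compactbound} on the ball, and the reduction to modular convergence via Proposition~\ref{pr2} all match the paper's strategy, but your tail estimate has a genuine gap, and it occurs in both branches of your two-way split for the same reason: you discard the strict inequalities $G(z)<s(z)<G^{\ast}(z)$ before they can help you. On the super-level set, after replacing $|u_n-u|^{s(z)}$ by $|u_n-u|^{G^{\ast}(z)}$ the further truncation at height $M$ cannot close the estimate, because $\int_{\{|u_n-u|>M\}\cap B_r^c}K\,|u_n-u|^{G^{\ast}(z)}\,dz\le\|K\|_\infty\int|u_n-u|^{G^{\ast}(z)}\,dz$ is merely bounded, not small: without equi-integrability the modular mass of $|u_n-u|^{G^{\ast}}$ may concentrate on $\{|u_n-u|>M\}$. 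What rescues this range is $|v|^{s(z)}\le M^{s(z)-G^{\ast}(z)}|v|^{G^{\ast}(z)}\le M^{-\delta}|v|^{G^{\ast}(z)}$ for $|v|>M$, which produces a small prefactor directly from the gap $G^{\ast}-s>0$. On the sub-level set the problem is sharper: your H\"older step produces a factor $\|K\chi_{B_r^c}\|_{G^{\ast}(\cdot)/(G^{\ast}(\cdot)-G(\cdot))}$, but hypothesis $(K)$ gives no control of $K$ in any $L^{p}(B_r^c)$-norm; it only asserts that $\int_{A_n\cap B_r^c}K\,dz$ is small when the $A_n$ have uniformly bounded Lebesgue measure. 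So $(K)$ simply cannot be ``supplied'' here.

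The way the paper closes the tail is a three-range split rather than a two-range one, packaged in a single pointwise inequality: for every $\epsilon>0$ there exist $0<t_0<t_1$ and $C>0$ with
\[
K(z)\,|t|^{s(z)}\ \le\ \epsilon\,C\left(|t|^{G(z)}+|t|^{G^{\ast}(z)}\right)+\chi_{\{t_0\le|t|\le t_1\}}\,K(z)\,|t|^{G(z)},
\]
obtained from the uniform limits $|t|^{s(z)}/|t|^{G(z)}\to0$ as $t\to0$ and $|t|^{s(z)}/|t|^{G^{\ast}(z)}\to0$ as $t\to\infty$. Integrating over $B_r^c$, the first term is controlled by $\epsilon$ times the \emph{bounded} quantity $A(u_n)=\int(|u_n|^{G(z)}+|u_n|^{G^{\ast}(z)})\,dz$ and requires no decay of $K$ at all, while the second term lives on the set $R_n=\{t_0<|u_n|<t_1\}$, whose measure is uniformly bounded by Chebyshev; only this intermediate piece invokes $(K)$. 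So the sub-threshold range must be absorbed into the small multiple $\epsilon\,C\,A(u_n)$ (not into $(K)$), and the super-threshold range must retain the factor $M^{-\delta}$ from $s<G^{\ast}$; once the tail is repaired along these lines, your ball argument and the passage to modular convergence go through as you describe.
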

\begin{proof}
Fix $s(\cdot)\in (G(\cdot),G^{\ast}(\cdot))$ and $\epsilon>0$. It
is easy to see that
$$\displaystyle \lim_{t\rightarrow
0}\frac{|t|^{s(z)}}{|t|^{G(z)}}=\lim_{t\rightarrow
+\infty}\frac{|t|^{s(z)}}{|t|^{G^{\ast}(z)}}=0 \ \ \mbox{uniformly
for} \ \ z\in \mathbb{R}^{N}.$$ Thus, there exist $0<t_{0}<t_{1}$
and a positive constant $C>0$ such that
$$K(z)|t|^{s(z)}\leq \epsilon C(|t|^{G(z)}+|t|^{G^{\ast}(z)})+\chi_{[t_0,t_1]}(z)K(z)|t|^{G(z)} \  \mbox{for all}\ t\in \mathbb{R} \ \mbox{and} \  z\in \mathbb{R}^{N}.$$
Set $$A(u)=\displaystyle
\int_{\mathbb{R}^{N}}|u|^{G(z)}dz+\displaystyle
\int_{\mathbb{R}^{N}}|u|^{G^{\ast}(z)}dz \ \ \mbox{and} \ \ R=\{z\in
\mathbb{R}^{N}, \ \ t_{0}<|u(z)|<t_{1}\}.$$ Let $(u_{n})\in X$ be a
sequence such that $u_n\rightharpoonup u$ in $X$. It is easy to see
that $(A(u_n))_n$ is bounded in $\mathbb{R}$. Denoting $R_n=\{x\in
\mathbb{R}^{N}, \ \ t_{0}<|u_{n}(x)|<t_{1}\}$, we get $\sup_{n\in
\mathbb{N}}|A_n|<+\infty$. Hence, by $(K)$, there exists a
positive radius $r>0$ such that
\begin{align}\label{peq2}
 \displaystyle \int_{B_{r}^{c}(0)}K(z)|u_n|^{s(z)}dz &\leq \epsilon
CA(u_n)+\int_{B_{r}^{c}(0)}\chi_{[t_0,t_1]}(z)K(z)|u_n|^{G(z)}dz\nonumber \\
&\leq \epsilon
CA(u_n)+(t_{1}^{G^{-}}+t_{1}^{G^{+}})\int_{B_{r}^{c}(0)\frown
R_n}K(z)dz\nonumber\\
&\leq (C'+t_{1}^{G^{-}}+t_{1}^{G^{+}})\epsilon, \  \mbox{for all}\ n\in
\mathbb{N}.
\end{align}
Now, since $s(\cdot)\in (1,G^{\ast}(\cdot))$ and $K\in
L^{\infty}(\mathbb{R}^{N})$, we deduce, that
\begin{equation}\label{peq3}
\displaystyle \lim_{n\rightarrow +\infty}
\int_{B_{r}(0)}K(x)|u_n|^{s(z)}dz=\int_{B_{r}(0)}K(x)|u|^{s(z)}dz.
\end{equation}
Here we used Lemma \ref{compactbound}. Combining \eqref{peq2} and
\eqref{peq3}, we conclude for $\epsilon>0$ small enough, that
$$\displaystyle \lim_{n\rightarrow +\infty}
\int_{\mathbb{R}^{N}}K(z)|u_n|^{s(z)}dz=\int_{\mathbb{R}^{N}}K(z)|u|^{s(z)}dz.$$
Consequently, using Proposition \ref{pr1}, we infer that
$$u_n\rightarrow u \ \ \mbox{in} \ \ L^{s(\cdot)}_{K}(\mathbb{R}^{N}) \  \mbox{for every}\ s(\cdot)\in (G(\cdot),G^{\ast}(\cdot)).$$
This completes the proof of Proposition \ref{compact}.
\end{proof}
\section{A nonlinear problem driven by $\Delta_{G,a}$}
As an application of the previous abstract results, the main result
of this section concerns the study of both nonvariational and
singular aspects of problem \eqref{imain}.
\subsection{Nonvariational case}
In this paragraph, we work under conditions introduced in
Proposition \ref{compact}. We are mainly concerned with the
following equation
\begin{equation}\label{main1}
-\Delta_{G,a}u=-\mbox{div}(\alpha_1
u\nabla_{x}r)-\mbox{div}(\alpha_2a^{\frac{1}{G(x,y)}}(x)u\nabla_{y}r)+f(z,u),
\ \ z=(x,y)\in \mathbb{R}^{N},
\end{equation}
  The hypotheses on functions $f$ and $r$ are the following:\\
$(H_1)$  $f(z,0)\neq 0$, $f(z,s)\leq (a(z)+b(z)|s|^{\gamma(z)-1})$
and $|f(z,s)|\leq (a(z)+|b(z)||s|^{\gamma(z)-1})$
a.e. $z\in \mathbb{R}^{N}$ and for all $s\in \mathbb{R}$ where\\

$\bullet$ $\gamma(\cdot)\in C_{+}(\mathbb{R}^{N})$ and
$\gamma(\cdot),\frac{\gamma(\cdot)}{\gamma(\cdot)-1}\in
(G(\cdot),G^{\ast}(\cdot)$.

$\bullet$ $b\in C_{+}(\mathbb{R}^{N}, \mathbb{R}^{-})$ and
$\frac{b}{K}\in L^{\infty}(\mathbb{R}^{N})$.

$\bullet$ $a\in
L^{\frac{G(\cdot)}{G(\cdot)-1}}(\mathbb{R}^{N})\cap
L^{\infty}(\mathbb{R}^{N})$.

\smallskip
$(H_2)$ $r: \mathbb{R}^{N}\rightarrow \mathbb{R}$ is some measurable
function satisfying
$$ \nabla r\in L^{\frac{G(\cdot)\beta(\cdot)}{(\beta(\cdot)-1)(G(\cdot)-1)}}(\mathbb{R}^{N}), $$
where $\frac{G(\cdot)\beta(\cdot)}{G(\cdot)-1}\in (G,G^{\ast})$.\\

$(H_3)$ $\alpha_1,\alpha_2\in C_{+}(\mathbb{R}^{N})$ and
$\frac{\alpha_1}{K},\frac{\alpha_2}{K}\in
L^{\infty}(\mathbb{R}^{N})$.
\begin{definition}
We say that $u\in X\setminus\{0\}$ is a weak solution of problem \eqref{main1} if
for all $v\in X\setminus\{0\}$,
\begin{align*}
&\displaystyle\int_{\mathbb{R}^{N}}\left[\left|\nabla_{x}u\right|^{G(x,y)-2}\nabla_{x}u\nabla_{x}v+a(x)\left|\nabla_{y}u\right|^{G(x,y)-2}
     \nabla_{y}u \nabla_{y}v\right]\,dx\,dy\\
     &-\displaystyle\int_{\mathbb{R}^{N}}\alpha_1 u\nabla_{x}r.\nabla_x v
     \,dx\,dy-\displaystyle\int_{\mathbb{R}^{N}}\alpha_2 [a(x)]^{\frac{1}{G(x,y)}}u\nabla_{y}r.\nabla_y v
     \,dx\,dy\\
     &-\displaystyle\int_{\mathbb{R}^{N}} f((x,y),u)v\,dx\,dy=0.
\end{align*}
\end{definition}
\begin{remark}
Under conditions $(A)$, $(G)$, $(K)$, $(H_1)-(H_3)$ and by virtue of
Proposition \ref{compact}, the definition of weak solution of
problem \eqref{main1} is well-defined.
\end{remark}
The main result of this paragraph reads as follows.
\begin{theorem}\label{thm1}
Assume that $(G)$, $(K)$ and $(H_1)-(H_3)$ hold. Then, problem
\eqref{main1} admits at least one nontrivial weak solution.
\end{theorem}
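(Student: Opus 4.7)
I would recast \eqref{main1} as a fixed-point equation by freezing the non-linear right-hand side, and then apply a topological fixed-point argument. For $w\in X$ define $\Psi(w)\in X^*$ by
\[
\langle\Psi(w),v\rangle_X=\int_{\mathbb{R}^N}\!\Bigl[\alpha_1 w\,\nabla_x r\cdot\nabla_x v+\alpha_2 [a(x)]^{1/G(z)} w\,\nabla_y r\cdot\nabla_y v+ f(z,w)\,v+|w|^{G(z)-2}w\,v\Bigr]\,dz.
\]
Using the compact embedding $X\hookrightarrow L^{s(\cdot)}_K(\mathbb{R}^N)$ from Proposition \ref{compact}, the bounds $\alpha_1/K,\alpha_2/K,b/K\in L^\infty$ supplied by $(H_1),(H_3)$, the integrability of $\nabla r$ in $(H_2)$, and the growth of $f$, variable-exponent H\"older inequalities show that $\Psi\colon X\to X^*$ is well-defined and bounded. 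By Lemma \ref{S+} and standard monotone operator theory, $\rho'\colon X\to X^*$ is a bicontinuous bijection. Set $T:=(\rho')^{-1}\circ\Psi$; a fixed point $u=T(u)$ is a weak solution of \eqref{main1}, because the auxiliary term $|w|^{G(z)-2}w$ inserted inside $\Psi$ cancels the same expression hidden in $\rho'(u)$ when $w=u$, leaving precisely the weak formulation in the definition above.

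The second step is to verify that $T$ is continuous and compact. If $w_n\rightharpoonup w$ in $X$, Proposition \ref{compact} gives $w_n\to w$ strongly in $L^{s(\cdot)}_K(\mathbb{R}^N)$ for every $s(\cdot)\in(G(\cdot),G^*(\cdot))$, and the exponent $\tfrac{G\beta}{(\beta-1)(G-1)}$ prescribed in $(H_2)$ is tailored exactly so that $\nabla r$ pairs with these strong traces of $w_n$ through H\"older. Combined with the continuity of the Nemytskii operator generated by $f$, this delivers $\Psi(w_n)\to\Psi(w)$ strongly in $X^*$. Continuity of $(\rho')^{-1}$ then yields $T(w_n)\to T(w)$ in $X$, so $T$ is continuous and compact.

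To produce the fixed point I would invoke Schaefer's theorem. Let $u\in X$ satisfy $u=\lambda T(u)$ for some $\lambda\in[0,1]$; testing the resulting identity $\rho'(u)=\lambda\Psi(u)$ against $u$, Lemma \ref{modular}(iii) yields $\|u\|_X^{G^-}\le\rho(u)$ whenever $\|u\|_X>1$. On the right-hand side I would bound the two divergence integrals by weighted H\"older, invoke the sign condition $b\le 0$ from $(H_1)$ to neutralise the superlinear $f(z,u)u$-contribution, and control the linear part through $a\in L^{G/(G-1)}\cap L^\infty$. Because $G^->2$ from $(G)$ and $\gamma,G<G^*$, the left-hand side grows strictly faster in $\|u\|_X$ than the right, producing an a priori bound $\|u\|_X\le M$ independent of $\lambda$. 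Schaefer's theorem then furnishes a fixed point $u_0$ of $T$, and nontriviality is automatic: if $u_0\equiv 0$ the weak equation would force $\int f(z,0)v\,dz=0$ for every $v\in X$, contradicting $f(\cdot,0)\not\equiv 0$ assumed in $(H_1)$.

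The hard part is the a priori estimate. The cross terms $\alpha_j u\,\nabla r$ sit on the right-hand side and cannot be absorbed by a linear move; they must be genuinely dominated by the coercivity of $\rho'$. It is precisely here that the three structural ingredients --- $G^->2$ from $(G)$, the delicate exponent $\tfrac{G\beta}{(\beta-1)(G-1)}\in(G,G^*)$ in $(H_2)$, and the sign condition $b\le 0$ in $(H_1)$ --- each play an essential role in closing the estimate.
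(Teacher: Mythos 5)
The paper proves this by defining a single operator $L\colon X\to X^*$ encoding the full weak formulation, establishing that $L$ is of type $(S_+)$ (Lemma~\ref{SS+}), that $\langle L(u),u\rangle>0$ on a large sphere (Lemma~\ref{coercive}), and then invoking degree theory for $(S_+)$ mappings to get $\deg(L,B(0,R),0)=1$. You instead propose a genuinely different topological route, converting the equation into a compact fixed-point problem $u=(\rho')^{-1}\Psi(u)$ and applying Schaefer's theorem. Unfortunately, this recasting introduces a gap that I do not see how to close.

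The difficulty is that your $\Psi$ must contain the term $\int_{\mathbb{R}^N}|w|^{G(z)-2}w\,v\,dz$ in order to cancel the identical lower-order term inside $\rho'(u)$ and recover the weak formulation of \eqref{main1} (which has no zero-order $|u|^{G-2}u$ term). But this term is \emph{not} a compact perturbation on $X$: the embedding $X\hookrightarrow L^{G(\cdot)}(\mathbb{R}^N)$ is only continuous on the whole space $\mathbb{R}^N$, never compact, and the Nemytskii map $w\mapsto|w|^{G(\cdot)-2}w$ from $L^{G(\cdot)}$ to $L^{G'(\cdot)}$ is not compact either. The compactness furnished by Proposition~\ref{compact} is strictly tied to the weight $K$ --- it gives strong convergence only in $L^{s(\cdot)}_K(\mathbb{R}^N)$ for $s\in(G,G^*)$ --- and that weight appears in the convection terms via $\alpha_1/K,\alpha_2/K\in L^\infty$ and in $f$ via $b/K\in L^\infty$, but it does not appear in front of $|w|^{G-2}w$. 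Consequently $\Psi$, hence $T=(\rho')^{-1}\circ\Psi$, is not compact, and Schaefer's theorem does not apply. The paper's formulation avoids this precisely because $(S_+)$ degree theory does not require the operator to be a compact perturbation of the identity; it only needs $L$ to be bounded, demicontinuous, of type $(S_+)$, and to satisfy the sign condition on the boundary of a large ball. If you wish to keep a Leray--Schauder-type argument, you would at minimum have to drop $|w|^{G-2}w$ from $\Psi$, but then the left-hand operator $A_G=\rho'-|{\cdot}|^{G-2}({\cdot})$ is no longer coercive on $X$ (only on the seminorm $\|\cdot\|_D$), and invertibility breaks down.

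A secondary remark: you assert that $(\rho')^{-1}$ is bicontinuous. This is plausible (strict monotonicity, coercivity, and the $(S_+)$ property of Lemma~\ref{S+} together give a homeomorphism by standard monotone operator theory) but it is not one of the paper's stated lemmas, so it would need a sentence of justification. Also, the a~priori estimate in the Schaefer scheme needs more care than you indicate: testing $\rho'(u)=\lambda\Psi(u)$ with $u$ gives $\rho(u)-\lambda\int|u|^{G}\,dz$ on the left, which for $\lambda$ near $1$ reduces essentially to the gradient modular alone and does not control $\|u\|_X^{G^-}$ uniformly in $\lambda$; this is a separate point from the compactness gap, but it would also need to be resolved.
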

The proof of Theorem \ref{thm1} relies on the topological degree
theory of $(S_+)$--type mappings. Define the operator $L:X\rightarrow
X^{\ast}$ by
\begin{align*}
\langle
L(u),v\rangle&=\displaystyle\int_{\mathbb{R}^{N}}\left[\left|\nabla_{x}u\right|^{G(x,y)-2}\nabla_{x}u\nabla_{x}v+a(x)\left|\nabla_{y}u\right|^{G(x,y)-2}
     \nabla_{y}u \nabla_{y}v\right]\,dx\,dy\\
     &-\displaystyle\int_{\mathbb{R}^{N}} \alpha_1 u\nabla_{x}r.\nabla_x v
     \,dx\,dy-\displaystyle\int_{\mathbb{R}^{N}} \alpha_2 [a(x)]^{\frac{1}{G(x,y)}}u\nabla_{y}r.\nabla_y v
     \,dx\,dy\\
     &-\displaystyle\int_{\mathbb{R}^{N}} f((x,y),u)v\,dx\,dy, \ \ u,v\in X.
\end{align*}
\begin{lemma}\label{SS+}
Suppose that assumptions of Theorem \ref{thm1} are fulfilled. Then
$L$ is a mapping of type $(\Ss_+$), that is, if
$u_{n}\rightharpoonup u$ in $X$ and $\displaystyle \limsup_{n\to
+\infty}\,\langle L(u_{n}),u_{n}-u\rangle_{X} \leq0$, then $u_{n}\to
u$ in $X$.
\end{lemma}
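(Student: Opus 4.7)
The strategy is to decompose the operator as $L=A-B-F$, where $\langle A(u),v\rangle$ collects the two Grushin gradient terms, $\langle B(u),v\rangle$ collects the two convection terms involving $\alpha_i$ and $\nabla r$, and $\langle F(u),v\rangle=\int f(z,u)v$. Once $B$ and $F$ are shown to be compact perturbations of $A$, the $(S_+)$ property transfers to $L$ from the analogue of Lemma \ref{S+} applied to the principal part $A$. The engine throughout is the compact embedding $X\hookrightarrow L^{s(\cdot)}_K(\mathbb{R}^N)$ of Proposition \ref{compact}, applied at two strategically chosen exponents: $s_1(\cdot)=\gamma(\cdot)$ for $F$, and $s_2(\cdot)=G(\cdot)\beta(\cdot)/(G(\cdot)-1)$ for $B$, both of which lie in $(G(\cdot),G^*(\cdot))$ by $(H_1)$ and $(H_2)$. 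Thus $u_n\to u$ strongly in $L^{s_1(\cdot)}_K$ and $L^{s_2(\cdot)}_K$.

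For $F$, the growth bound $|f(z,s)|\leq a(z)+|b(z)||s|^{\gamma(z)-1}$ splits $\langle F(u_n),u_n-u\rangle$ into two pieces. The $a$-piece is handled by variable H\"older with dual exponents $G(\cdot)/(G(\cdot)-1)$ and $G(\cdot)$, using $a\in L^{G(\cdot)/(G(\cdot)-1)}(\mathbb{R}^N)$ from $(H_1)$ together with the weak convergence $u_n-u\rightharpoonup 0$ in $L^{G(\cdot)}$. The $b$-piece uses $|b|/K\in L^\infty$ to absorb the weight, then applies generalized H\"older in $L^{\gamma(\cdot)}_K$ and $L^{\gamma(\cdot)/(\gamma(\cdot)-1)}_K$, with Lemma \ref{L2.2} converting the power $|u_n|^{\gamma(\cdot)-1}$ into a Lebesgue norm; strong $L^{\gamma(\cdot)}_K$-convergence then kills the contribution. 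For $B$, I apply the three-factor variable H\"older inequality \eqref{Hol1} to each summand, factoring $\alpha_1 u_n\nabla_x r\cdot\nabla_x(u_n-u)=(\alpha_1/K)(Ku_n)(\nabla_x r)(\nabla_x(u_n-u))$ and using exponents $s_2(\cdot)$, $G(\cdot)\beta(\cdot)/[(\beta(\cdot)-1)(G(\cdot)-1)]$, and $G(\cdot)$, whose reciprocals sum to $1$. The three factors then lie in the right spaces by $(H_3)$ plus strong $L^{s_2(\cdot)}_K$-convergence of $u_n$, by hypothesis $(H_2)$ on $\nabla r$, and by weak (hence bounded) convergence of $\nabla_x(u_n-u)$ in $L^{G(\cdot)}$. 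The second summand is symmetric, after inserting $a^{1/G(\cdot)}$.

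Combining these estimates with $\limsup\langle L(u_n),u_n-u\rangle\leq 0$ yields $\limsup\langle A(u_n),u_n-u\rangle\leq 0$. Since $A(u)\in X^*$, weak convergence gives $\langle A(u),u_n-u\rangle\to 0$, so $\langle A(u_n)-A(u),u_n-u\rangle\to 0$. The variable-exponent Lindqvist-type monotonicity inequality underlying Lemma \ref{S+}, combined with Proposition \ref{pr2} applied in the two gradient variables weighted by $1$ and by $a(x)$, then forces $\|u_n-u\|_D\to 0$. The $L^{G(\cdot)}(\mathbb{R}^N)$-component of the $X$-norm is recovered by the same tail argument used in Proposition \ref{compact}: the local strong convergence provided by Lemma \ref{compactbound} on balls $B(0,r)$, together with a uniform tail estimate on $B(0,r)^c$ controlled through condition $(K)$ and the modular bound of Lemma \ref{modular}, upgrades the weak $L^{G(\cdot)}$-convergence to a strong one.

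The main obstacle is the convection term $B$: it is not genuinely lower-order, since it pairs the only weakly convergent quantity $\nabla(u_n-u)$ with factors depending on both $u_n$ and $\nabla r$. Making the three-factor variable H\"older estimate close requires the precise matching of exponents built into $(H_2)$, together with the upgrade from weak to strong convergence of $u_n$ in $L^{G(\cdot)\beta(\cdot)/(G(\cdot)-1)}_K$ delivered by Proposition \ref{compact}; without that exact exponent bookkeeping the reduction to the $(S_+)$-property of $A$ would fail.
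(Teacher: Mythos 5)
Your decomposition $L=A-B-F$ and the treatment of the convection term $B$ and the Nemytskii term $F$ follow the paper's own proof closely: both arguments isolate a principal part, use the weighted compact embedding $X\hookrightarrow L^{s(\cdot)}_K(\mathbb{R}^N)$ of Proposition \ref{compact} at the exponents $\gamma(\cdot)$ and $\frac{G(\cdot)\beta(\cdot)}{G(\cdot)-1}$, use $|b|/K,\;\alpha_i/K\in L^\infty$ to absorb the weight $K$, and use Lemma \ref{L2.2} or Proposition \ref{pr1} to convert modulars into norms. Two points, however, do not close.

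First, your treatment of the $a(z)$-piece of $F$ is incomplete. After the triangle inequality one must control $\int_{\mathbb{R}^N}|a(z)||u_n-u|\,dz$, and weak convergence $u_n-u\rightharpoonup 0$ in $L^{G(\cdot)}$ does \emph{not} give this: H\"older only yields $\|a\|_{G'(\cdot)}\|u_n-u\|_{G(\cdot)}$, which is bounded but need not decay. The paper uses the ball/complement split: on $B_r$ invoke Lemma \ref{compactbound} (local strong convergence), and on $B_r^c$ use that $\|a\|_{L^{G'(\cdot)}(B_r^c)}\to 0$ as $r\to\infty$ by $(H_1)$. That two-scale argument is the content you are missing.

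Second, and more seriously, the final step of your proposal fails. After deducing $\|u_n-u\|_D\to 0$ from the monotonicity of the gradient part $A$, you assert that the $L^{G(\cdot)}(\mathbb{R}^N)$-component of the $X$-norm is ``recovered by the same tail argument used in Proposition \ref{compact}.'' But Proposition \ref{compact} gives compactness only into the \emph{weighted} spaces $L^{s(\cdot)}_K(\mathbb{R}^N)$ for $s(\cdot)\in(G(\cdot),G^*(\cdot))$, strictly excluding $s=G$, and the tail estimate there depends crucially on the decay built into condition $(K)$ together with the two-sided growth $K|t|^{s}\lesssim |t|^{G}+|t|^{G^*}$, which requires $s$ to sit strictly between $G$ and $G^*$. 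No such weight appears in $\|u_n-u\|_{G(\cdot)}$, and the unweighted embedding $X\hookrightarrow L^{G(\cdot)}(\mathbb{R}^N)$ is not compact — this is precisely the lack-of-compactness issue that motivated the whole construction. Strong convergence of the gradient norm together with weak $L^{G(\cdot)}$-convergence does not, on $\mathbb{R}^N$ without a Poincar\'e inequality, upgrade to strong $L^{G(\cdot)}$-convergence. The paper avoids this by reducing to the $(S_+)$ property of the full modular derivative $\rho'$ (Lemma \ref{S+}), which already couples the gradient terms with the $\int|u|^{G(z)-2}uv\,dz$ term so that the $L^{G(\cdot)}$-convergence comes out of the monotonicity argument jointly, rather than being recovered afterward. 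If you insist on separating $A$ from the zeroth-order term, you need an independent mechanism to control the $L^{G(\cdot)}$-tail; the compactness machinery of Section 3 does not supply one at the endpoint $s=G$.
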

\begin{proof}
Let $\{u_{n}\}_{n \geq 1} \subseteq X$ be a sequence such that
    \begin{align*}
    u_{n}\rightharpoonup u \quad \text{in }X \quad \text{and}\quad  \displaystyle \limsup_{n\to +\infty}\,\langle L(u_{n}),u_{n}-u\rangle_X \leq 0.
    \end{align*}
    This implies that
    \begin{align}\label{eq1}
      \displaystyle \limsup_{n\to +\infty}\,\langle L(u_{n})-L(u),u_{n}-u\rangle_X \leq 0.
    \end{align}
    \textbf{Claim $1$.} $\displaystyle \lim_{n\rightarrow +\infty} \int_{\mathbb{R}^{N}} (f(z,u_n)-f(z,u))
    (u_n-u)dz=0$.\\

    For $r>0$, we denote by $B_r$ the open ball centered at the
    origin and of a radius $r$. Applying the H\"older inequality, we get
    \begin{align}\label{eq2}
\displaystyle  \int_{\mathbb{R}^{N}} &(f(z,u_n)-f(z,u))
    (u_n-u)dz \leq \int_{\mathbb{R}^{N}} (|f(z,u_n)|+|f(z,u)|)
    |u_n-u|dz \\
    &\leq  \displaystyle \int_{\mathbb{R}^{N}} |a(z)||u_n-u|dz + \displaystyle
    \int_{\mathbb{R}^{N}}
    |b(z)||u_n|^{\gamma(z)-1}|u_n-u| dz \nonumber\\ & + \displaystyle \int_{\mathbb{R}^{N}}|b(z)|
    |u|^{\gamma(z)-1}|u_n-u|dz \nonumber\\
    &\leq
    \displaystyle \int_{B_r} |a(z)||u_n-u|dz+\displaystyle \int_{B^{c}_r} |a(z)||u_n-u|dz\nonumber\\&+\||b|^{\frac{\gamma(\cdot)-1}{\gamma(\cdot)}}
    |u_n|^{\gamma(\cdot)-1}\|_{\frac{\gamma(\cdot)}{\gamma(\cdot)-1}}
    \||b|^{\frac{1}{\gamma(\cdot)}}|u_n-u|\|_{\gamma(\cdot)}\nonumber \\
   & +\||b|^{\frac{\gamma(\cdot)-1}{\gamma(\cdot)}}
    |u|^{\gamma(\cdot)-1}\|_{\frac{\gamma(\cdot)}{\gamma(\cdot)-1}}
    \||b|^{\frac{1}{\gamma(\cdot)}}|u_n-u|\|_{\gamma(\cdot)}
    \nonumber.
    \end{align}
Again, by H\"older's inequality, we obtain
$$\displaystyle \int_{B_r} |a(z)||u_n-u|dz\leq
\|a\|_{\frac{G(\cdot)}{G(\cdot)-1}(B_r)}\|u_n-u\|_{G(\cdot)}.
$$
Using Lemma \ref{compactbound}, it follows that
\begin{equation}\label{eq3}
\displaystyle \lim_{n\rightarrow +\infty}\displaystyle \int_{B_r}
|a(z)||u_n-u|dz=0.
\end{equation}
Now, using $(H_1)$, we deduce that
\begin{equation}\label{eq4}
\displaystyle \int_{B^{c}_r} |a(z)||u_n-u|dz\leq
\|a\|_{L^{\frac{G(\cdot)}{G(\cdot)-1}}(B^{c}_r)}\|u_n-u\|_{L^{G(\cdot)}(B^{c}_r)}\leq
C\|a\|_{L^{\frac{G(\cdot)}{G(\cdot)-1}}(B^{c}_r)} \rightarrow 0,
\end{equation}
as  $r \rightarrow +\infty$ and for some positive constant $C$. \\
On the other hand, by $(H_1)$ and Propositions \ref{pr1} and
\ref{compact}, we have
\begin{align*}
\||b|^{\frac{\gamma(\cdot)-1}{\gamma(\cdot)}}
    |u_n|^{\gamma(\cdot)-1}\|&_{\frac{\gamma(\cdot)}{\gamma(\cdot)-1}}
    \||b|^{\frac{1}{\gamma(\cdot)}}|u_n-u|\|_{\gamma(\cdot)}
    \leq C\||b|^{\frac{1}{\gamma(\cdot)}}|u_n-u|\|_{\gamma(\cdot)}\\
    &\leq C\left([\displaystyle
    \int_{\mathbb{R}^{N}}|b(z)||u_n-u|^{\gamma(z)}dz]^{\frac{1}{\gamma^{-}}}+[\displaystyle
    \int_{\mathbb{R}^{N}}|b(z)||u_n-u|^{\gamma(z)}dz]^{\frac{1}{\gamma^{+}}}\right)\\
    &\leq C\left([\displaystyle
    \int_{\mathbb{R}^{N}}|K(z)||u_n-u|^{\gamma(z)}dz]^{\frac{1}{\gamma^{-}}}+[\displaystyle
    \int_{\mathbb{R}^{N}}|K(z)||u_n-u|^{\gamma(z)}dz]^{\frac{1}{\gamma^{+}}}\right),
\end{align*}
for some positive constant $C$. Thus, in light of Proposition
\ref{compact}, we infer that
\begin{equation}\label{eq5}
\displaystyle \lim_{n\rightarrow +\infty}
\||b|^{\frac{\gamma(\cdot)-1}{\gamma(\cdot)}}
    |u_n|^{\gamma(\cdot)-1}\|_{\frac{\gamma(\cdot)}{\gamma(\cdot)-1}}
    \||b|^{\frac{1}{\gamma(\cdot)}}|u_n-u|\|_{\gamma(\cdot)}=0.
\end{equation}
In the same way, we prove that
\begin{equation}\label{eq6}
\displaystyle \lim_{n\rightarrow +\infty}
\||b|^{\frac{\gamma(\cdot)-1}{\gamma(\cdot)}}
    |u|^{\gamma(\cdot)-1}\|_{\frac{\gamma(\cdot)}{\gamma(\cdot)-1}}
    \||b|^{\frac{1}{\gamma(\cdot)}}|u_n-u|\|_{\gamma(\cdot)}=0
\end{equation}
Combining \eqref{eq2}, \eqref{eq3}, \eqref{eq4};\eqref{eq5} and
\eqref{eq6}, we get Claim $1$.\\

\textbf{Claim $2$}. In what follows, we show  that
\begin{align*}
\displaystyle \lim_{n\rightarrow +\infty}
\displaystyle\int_{\mathbb{R}^{N}}\alpha_1
(u_n-u)&\nabla_{x}r.\nabla_x (u_n-u)
     \,dx\,dy\\&=\displaystyle \lim_{n\rightarrow +\infty} \displaystyle\int_{\mathbb{R}^{N}}\alpha_2 [a(x)]^{\frac{1}{G(x,y)}}(u_n-u)\nabla_{y}r.\nabla_y
     (u_n-u)
     \,dx\,dy=0.
     \end{align*}
Invoking the H\"older inequality and Proposition \ref{pr1}, we obtain
\begin{align}\label{eq7}
\displaystyle\int_{\mathbb{R}^{N}}&\alpha_1
(u_n-u)\nabla_{x}r.\nabla_x (u_n-u)
     \,dx\,dy\leq
     \|\alpha_{1}|u_n-u|\nabla_xr\|_{\frac{G(\cdot)}{G(\cdot)-1)}}\|\nabla_x(u_n-u)\|_{G(\cdot)}\\
     &\leq C   \left(\displaystyle\int_{\mathbb{R}^{N}} \alpha_{1}^{\frac{G(x,y)}{G(x,y)-1}} |u_n-u|^{\frac{G(x,y)}{G(x,y)-1}}|\nabla_x r|^{\frac{G(x,y)}{G(x,y)-1}}dxdy
     \right)^{\frac{G^{-}-1}{G^{+}}}\nonumber\\ &+C\left(\displaystyle\int_{\mathbb{R}^{N}} \alpha_{1}^{\frac{G(x,y)}{G(x,y)-1}} |u_n-u|^{\frac{G(x,y)}{G(x,y)-1}}|\nabla_x r|^{\frac{G(x,y)}{G(x,y)-1}}dxdy
     \right)^{\frac{G^{+}-1}{G^{-}}} \nonumber.
\end{align}
Now, from conditions $(H_2)$, $(H_3)$ and the H\"older inequality, we
deduce that
\begin{align*}
\displaystyle\int_{\mathbb{R}^{N}} (\alpha_{1}
|u_n-u|)^{\frac{G(x,y)}{G(x,y)-1}}&|\nabla_x
r|^{\frac{G(x,y)}{G(x,y)-1}}dxdy\leq C
\displaystyle\int_{\mathbb{R}^{N}} (K
|u_n-u|)^{\frac{G(x,y)}{G(x,y)-1}}|\nabla_x
r|^{\frac{G(x,y)}{G(x,y)-1}}dxdy\\
&\leq C\|K^{\frac{1}{\beta
(\cdot)}}|u_n-u|^{\frac{G(\cdot,\cdot)}{G(\cdot,\cdot)-1}}\|_{\beta(\cdot)}\||\nabla_x
r|^{\frac{G(x,y)}{G(x,y)-1}}\|_{\frac{\beta(\cdot)}{\beta(\cdot)-1}}\\
&\leq C \|K^{\frac{1}{\beta
(\cdot)}}|u_n-u|^{\frac{G(\cdot,\cdot)}{G(\cdot,\cdot)-1}}\|_{\beta(\cdot)},
\end{align*}
which,  by Proposition \ref{compact}, implies that
\begin{equation}\label{eq8}
\displaystyle \lim_{n\rightarrow
+\infty}\displaystyle\int_{\mathbb{R}^{N}} (\alpha_{1}
|u_n-u|)^{\frac{G(x,y)}{G(x,y)-1}}|\nabla_x
r|^{\frac{G(x,y)}{G(x,y)-1}}dxdy=0.
\end{equation}
Consequently, from \eqref{eq7} and \eqref{eq8}, we conclude that
$$ \displaystyle\int_{\mathbb{R}^{N}}\alpha_1
(u_n-u)\nabla_{x}r.\nabla_x (u_n-u)
     \,dx\,dy=0.$$
Again, using the same argument, we show that
$$\displaystyle \lim_{n\rightarrow +\infty} \displaystyle\int_{\mathbb{R}^{N}}\alpha_2 [a(x)]^{\frac{1}{G(x,y)}}(u_n-u)\nabla_{y}r.\nabla_y
     (u_n-u)
     \,dx\,dy=0. $$
     This proves Claim $2$.

     Finally, from Claim $1$, Claim $2$ and \eqref{eq1}, we infer that
     $$\displaystyle \limsup_{n\to +\infty}\,\langle \rho'(u_{n})-\rho'(u),u_{n}-u\rangle_X \leq 0.$$
     Hence, by Lemma \ref{S+}, we get our desired result.
\end{proof}

\begin{lemma}\label{coercive}
Suppose that assumptions of Theorem \ref{thm1} are fulfilled. Then
for $R>0$ large enough, we have
$$\langle L(u),u\rangle>0 \ \ \mbox{for all} \ \ u\in X \ \ \mbox{such that} \ \ \|u\|=R.$$
\end{lemma}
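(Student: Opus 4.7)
The plan is to expand $\langle L(u),u\rangle$ as a sum of four contributions, identify the coercive piece, and bound the remaining three as perturbations of strictly lower order. Setting
\[
D(u)=\int_{\mathbb{R}^{N}}\bigl[|\nabla_{x}u|^{G(x,y)}+a(x)|\nabla_{y}u|^{G(x,y)}\bigr]\,dx\,dy,
\]
and writing $A_{1}(u)=\int \alpha_{1} u\,\nabla_{x}r\cdot\nabla_{x}u$, $A_{2}(u)=\int \alpha_{2}\,a^{1/G(x,y)}u\,\nabla_{y}r\cdot\nabla_{y}u$, and $F(u)=\int f(z,u)\,u\,dz$, we have $\langle L(u),u\rangle=D(u)-A_{1}(u)-A_{2}(u)-F(u)$. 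The strategy is to show that $D(u)\to+\infty$ faster than any of the three remaining terms on the sphere $\{\|u\|_{X}=R\}$ as $R\to\infty$; the modular inequalities of Lemma \ref{modular}(iii) then force positivity once $R$ is taken large enough.

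For the drift terms $A_{1},A_{2}$, I would first apply Young's inequality pointwise with a small parameter $\eta>0$, e.g.
\[
|\alpha_{1}u\,\nabla_{x}r\cdot\nabla_{x}u|\le \eta|\nabla_{x}u|^{G(z)}+C_{\eta}\bigl(\alpha_{1}|u||\nabla_{x}r|\bigr)^{G(z)/(G(z)-1)},
\]
so that integrating absorbs $\eta D(u)$ into the leading term. For the remainder, one uses the crucial exponent identity
\[
\frac{G(z)-1}{G(z)}=\frac{1}{p(z)}+\frac{1}{q(z)},\qquad p(z)=\frac{G(z)\beta(z)}{G(z)-1},\quad q(z)=\frac{G(z)\beta(z)}{(\beta(z)-1)(G(z)-1)},
\]
supplied by hypothesis $(H_{2})$, which ensures $\nabla r\in L^{q(\cdot)}(\mathbb{R}^{N})$ and $p(\cdot)\in(G(\cdot),G^{\ast}(\cdot))$. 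A generalized H\"older inequality together with $\alpha_{i}/K\in L^{\infty}$ from $(H_{3})$ then reduces the remainder to $C\|Ku\|_{p(\cdot)}^{\theta}\|\nabla r\|_{q(\cdot)}^{\theta}$, and Proposition \ref{compact} (combined with Proposition \ref{pr1}) bounds this by $C\|u\|_{X}^{\theta}$ for some exponent $\theta$ depending on $p^{\pm}$. The $F(u)$ term is treated analogously: from $(H_{1})$ one gets $|F(u)|\le \int a|u|+\int|b||u|^{\gamma(z)}$; the first summand is bounded by $\|a\|_{G(\cdot)/(G(\cdot)-1)}\|u\|_{G(\cdot)}\le C\|u\|_{X}$ by the H\"older inequality and the hypothesis $a\in L^{G(\cdot)/(G(\cdot)-1)}$, while the second is bounded using $|b|\le C K$ (since $b/K\in L^{\infty}$) and the continuous embedding $X\hookrightarrow L^{\gamma(\cdot)}_{K}(\mathbb{R}^{N})$ from Proposition \ref{compact}, yielding a bound of the form $C\|u\|_{X}^{\gamma^{\pm}}$.

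Collecting all bounds and using Lemma \ref{modular}(iii) to ensure $D(u)$ grows essentially like $\|u\|_{X}^{G^{-}}$ on the relevant regime, one arrives at an estimate
\[
\langle L(u),u\rangle\ge (1-2\eta)\,c\,\|u\|_{X}^{G^{-}}-C\bigl(\|u\|_{X}^{\theta_{1}}+\|u\|_{X}^{\theta_{2}}+\|u\|_{X}+\|u\|_{X}^{\gamma^{\pm}}\bigr),
\]
so that for $\eta>0$ sufficiently small and $R$ sufficiently large, $\langle L(u),u\rangle>0$ on $\{\|u\|_{X}=R\}$. The principal technical obstacle is making sure that each perturbation exponent $\theta_{i}$ is strictly smaller than $G^{-}$, which forces a delicate choice of $\beta(\cdot)$ in $(H_{2})$ together with the strict range $(G,G^{\ast})$ imposed on $\gamma$, $p$, and the auxiliary exponents; a secondary subtlety is that $\langle L(u),u\rangle$ does not contain the $\int|u|^{G(z)}$ contribution of $\rho(u)$, so the coercivity of $D$ alone must be exploited on the full norm $\|u\|_{X}$, which requires separating the cases $\|u\|_{D}\gtrsim\|u\|_{X}$ and $\|u\|_{L^{G(\cdot)}}\gtrsim\|u\|_{X}$ and in the latter using Sobolev-type control provided by Proposition \ref{compact} to retrieve the needed bound.
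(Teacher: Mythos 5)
The central difficulty in your sketch is that you bound the nonlinear term by its absolute value,
$$|F(u)|\le \int_{\mathbb{R}^{N}} a(z)|u|\,dz+\int_{\mathbb{R}^{N}}|b(z)|\,|u|^{\gamma(z)}\,dz,$$
and then estimate the second summand by $C\|u\|_{X}^{\gamma^{\pm}}$ via Proposition \ref{compact}. This destroys the argument: hypothesis $(H_{1})$ imposes $\gamma(\cdot)\in(G(\cdot),G^{\ast}(\cdot))$, so $\gamma^{\pm}>G^{\pm}\ge G^{-}$, and the resulting perturbation is of \emph{strictly higher} order than the coercive contribution $\|u\|^{G^{-}}$. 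No choice of $\beta$ or of the auxiliary exponents in $(H_{2})$ can fix this, because $\gamma>G$ is part of the standing hypotheses, not a parameter you are free to shrink. Your closing remark that the matter is ``a delicate choice of $\beta(\cdot)$'' therefore does not hold up; the route is closed.

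The paper avoids this entirely by exploiting the \emph{sign} information in $(H_{1})$ rather than absolute values. Since $b$ is negative ($b\in C(\mathbb{R}^{N},\mathbb{R}^{-})$), the one-sided growth bound $f(z,s)\le a(z)+b(z)|s|^{\gamma(z)-1}\le a(z)$ makes the $|u|^{\gamma}$-term a \emph{helpful} term, and one simply has $-\int f(z,u)u\,dz\ge -\int a(z)u\,dz\ge -\|a\|_{G(\cdot)/(G(\cdot)-1)}\|u\|_{G(\cdot)}$, i.e.\ a linear perturbation in $\|u\|$. The drift terms are handled by direct H\"older (no Young's inequality), via Lemma \ref{L2.2} and Proposition \ref{compact}, producing a bound $C\|u\|^{2}$; then the paper simply uses $G^{-}>2$, which is exactly why $(G)$ requires $G>2$. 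Your Young-with-absorption idea is an unnecessary detour here, and the tracking of the remainder exponent $\theta$ is left vague — the paper's direct H\"older is both simpler and sufficient. Finally, you correctly flag that $\langle L(u),u\rangle$ lacks the $\int|u|^{G(z)}$ piece of $\rho(u)$, so the inequality $D(u)\ge\|u\|^{G^{-}}$ needs justification; the paper also asserts it without comment, so this is a shared lacuna rather than a defect of your plan, but your proposed case split does not by itself supply the missing estimate.
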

\begin{proof}
Let $u\in X$ be such that $\|u\|>1$. Hence, in view of Lemmas
\ref{L2.2} and \ref{modular} and Proposition \ref{compact} and
the H\"older inequality, we obtain
\begin{align*}
\langle L(u),u\rangle &
=\displaystyle\int_{\mathbb{R}^{N}}\left[\left|\nabla_{x}u\right|^{G(x,y)}+a(x)\left|\nabla_{y}u\right|^{G(x,y)}
    \right]\,dx\,dy\\
     &-\displaystyle\int_{\mathbb{R}^{N}} \alpha_1 u\nabla_{x}r.\nabla_x
     u
     \,dx\,dy-\displaystyle\int_{\mathbb{R}^{N}} \alpha_2 [a(x)]^{\frac{1}{G(x,y)}}u\nabla_{y}r.\nabla_y
     u
     \,dx\,dy\\
     &-\displaystyle\int_{\mathbb{R}^{N}} f((x,y),u)v\,dx\,dy
\\&\geq
\displaystyle\int_{\mathbb{R}^{N}}\left[\left|\nabla_{x}u\right|^{G(x,y)}+a(x)\left|\nabla_{y}u\right|^{G(x,y)}
    \right]\,dx\,dy-\displaystyle\int_{\mathbb{R}^{N}} \alpha_1 u\nabla_{x}r.\nabla_x
     u
     \,dx\,dy\\
     &-\displaystyle\int_{\mathbb{R}^{N}} \alpha_2 [a(x)]^{\frac{1}{G(x,y)}}u\nabla_{y}r.\nabla_y
     u
     \,dx\,dy
     -\displaystyle\int_{\mathbb{R}^{N}}a(z)u\,dz,
     \\&
     \geq
\|u\|^{G^{-}}-\|\nabla_{x}r\|_{\frac{G(\cdot)\beta(\cdot)}{(G(\cdot)-1)(\beta(\cdot)-1)}}\|\alpha_{1}^{\frac{G(\cdot)-1}{\beta(\cdot)G(\cdot)}}u\|_{\frac{\beta(\cdot)
G(\cdot)}{G(\cdot)-1}}\|\nabla_{x}u\|_{G(\cdot)}
\\
&-
\|\nabla_{y}r\|_{\frac{G(\cdot)\beta(\cdot)}{(G(\cdot)-1)(\beta(\cdot)-1)}}\|\alpha_{2}^{\frac{G(\cdot)-1}{\beta(\cdot)G(\cdot)}}u\|_{\frac{\beta(\cdot)
G(\cdot)}{G(\cdot)-1}}\|a^{\frac{1}{G(\cdot)}}\nabla_{y}u\|_{G(\cdot)}
-
\|a\|_{\frac{G(\cdot)}{G(\cdot)-1}}\|u\|_{G(\cdot)}  \\
&\geq
\|u\|^{G^{-}}-C\|\nabla_{x}r\|_{\frac{G(\cdot)\beta(\cdot)}{(G(\cdot)-1)(\beta(\cdot)-1)}}\|K^{\frac{G(\cdot)-1}{\beta(\cdot)G(\cdot)}}u\|_{\frac{\beta(\cdot)
G(\cdot)}{G(\cdot)-1}}\|\nabla_{x}u\|_{G(\cdot)}
\\
&-C
\|\nabla_{y}r\|_{\frac{G(\cdot)\beta(\cdot)}{(G(\cdot)-1)(\beta(\cdot)-1)}}\|K^{\frac{G(\cdot)-1}{\beta(\cdot)G(\cdot)}}u\|_{\frac{\beta(\cdot)
G(\cdot)}{G(\cdot)-1}}\|a^{\frac{1}{G(\cdot)}}\nabla_{y}u\|_{G(\cdot)}
 -
\|a\|_{\frac{G(\cdot)}{G(\cdot)-1}}\|u\|_{G(\cdot)}\\
&\geq
\|u\|^{G^{-}}-C\|\nabla_{x}r\|_{\frac{G(\cdot)\beta(\cdot)}{(G(\cdot)-1)(\beta(\cdot)-1)}}\|u\|^{2}
-C
\|\nabla_{y}r\|_{\frac{G(\cdot)\beta(\cdot)}{(G(\cdot)-1)(\beta(\cdot)-1)}}\|u\|^{2} \\
&- \|a\|_{\frac{G(\cdot)}{G(\cdot)-1}}\|u\| ,
\end{align*}
where $C$ is a positive constant. Choosing $\|u\|=R$ large enough,
we deduce from the last inequality that
$$\langle L(u),u\rangle>0 \ \ \mbox{for all} \ \ u\in X \ \ \mbox{such that} \ \ \|u\|=R.$$
This completes the proof of Lemma \ref{coercive}.
\end{proof}

\textit{Proof of Theorem \ref{thm1} completed.} It is clear that $L$
is also demicontinuous and bounded. Then, in light of Lemmas
\ref{SS+} and \ref{coercive} and using the topological degree theory
for $(S_+)$ type mappings, we conclude that
$$deg (L,B(0,R), 0)=1,$$
where $R$ is defined in Lemma \ref{coercive}. Therefore the equation
$L(u) = 0$ has at least one solution $u \in B(0, R)$. From
assumption $(H_1)$, we can conclude that $u$ is a nontrivial weak
solution of equation \eqref{main1}. This completes the proof of Theorem
\ref{thm1}. \qed

\subsection{Singular problem}
In this subsection, we work under conditions introduced in
Proposition \ref{compact}. Here, we are interested in weak solutions
to nonlinear singular problems. Precisely, we study the following
singular double phase equation
\begin{equation}\label{main2}
-\Delta_{G,a}u+|u|^{G(x,y)-2}u=\frac{b(x,y)}{u^{\sigma(x,y)}}, \ \
(x,y)\in \mathbb{R}^{N},
\end{equation}
where $\sigma(\cdot) \in C^{1}(\mathbb{R}^{N}),0<\sigma(\cdot) <1$.
The assumption on
function $b$ is the following:\\
$(A)$ $b>0$ in $\mathbb{R}^{N}$, $b\in L^{1}(\mathbb{R}^{N})\bigcap
L^{G(\cdot)}(\mathbb{R}^{N})\bigcap
L^{\frac{G(\cdot)}{G(\cdot)-1}}(\mathbb{R}^{N}) $ and
$\frac{b}{K}\in L^{\infty}(\mathbb{R}^{N})$.
\begin{definition}
We say that $u \in X\setminus\{0\}$ is a weak solution of problem \eqref{main2} if
$u\geq 0, u\neq0, u^{-\sigma(\cdot)}v\in L^{1}(\mathbb{R}^{N})$ for
all $v \in X \setminus \{0\}$ and
\begin{align*}
&\displaystyle\int_{\mathbb{R}^{N}}\left[\left|\nabla_{x}u\right|^{G(x,y)-2}\nabla_{x}u\nabla_{x}v+a(x)\left|\nabla_{y}u\right|^{G(x,y)-2}
     \nabla_{y}u \nabla_{y}v\right]\,dx\,dy\\
&=\displaystyle\int_{\mathbb{R}^{N}} \frac{b(z)}{u^{\sigma(z)}}vdz.
     \end{align*}
\end{definition}
Our main result is the following existence theorem.
\begin{theorem}\label{sing}
Let $(A)$, $(G)$ and $(K)$ be satisfied. Then problem \eqref{main2}
admits at least one nontrivial positive weak solution.
\end{theorem}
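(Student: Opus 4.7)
The strategy is to solve approximate non-singular problems variationally, extract a pointwise monotone sequence of solutions, and pass to the limit via a domination argument. The role of the $(\Ss_+)$-property (Lemma \ref{S+}) is to upgrade weak to strong convergence once the singular term is controlled.

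\smallskip

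\emph{Regularization.} For each $n \in \mathbb{N}$ I would consider the non-singular perturbed equation
$$-\Delta_{G,a}u + |u|^{G(x,y)-2}u = \frac{b(x,y)}{(u^+ + 1/n)^{\sigma(x,y)}}, \quad (x,y) \in \mathbb{R}^{N},$$
whose right-hand side is bounded by $n^{\sigma^+}b(z)$. Letting $H_n(z,u)$ denote the antiderivative in $u$, the energy
$$J_n(u) = \int_{\mathbb{R}^{N}}\frac{1}{G(z)}\bigl(|\nabla_x u|^{G(z)} + a(x)|\nabla_y u|^{G(z)} + |u|^{G(z)}\bigr)\,dz - \int_{\mathbb{R}^{N}} H_n(z,u)\,dz$$
is of class $C^1$ on $X$, weakly lower semicontinuous, and coercive (use $H_n(z,u) \leq C\,b(z)(|u|+1)$ together with Lemma \ref{modular}, H\"older, and $b \in L^{G(\cdot)/(G(\cdot)-1)}(\mathbb{R}^{N})$). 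The direct method produces a minimizer $u_n \in X$; comparing $J_n(|u|) \leq J_n(u)$ allows us to assume $u_n \geq 0$, and $u_n$ solves the approximate equation weakly.

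\smallskip

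\emph{A priori bounds and monotonicity.} Testing against $u_n$ gives $\rho(u_n) \leq \int_{\mathbb{R}^{N}} b(z)\,u_n^{1-\sigma(z)}\,dz$, which by Lemma \ref{L2.2} and H\"older is controlled by $C\bigl(\|u_n\|_{G(\cdot)}^{1-\sigma^-} + \|u_n\|_{G(\cdot)}^{1-\sigma^+}\bigr)$; since $1-\sigma^+ < 1 < G^-$, Lemma \ref{modular} yields a uniform bound $\|u_n\| \leq C$. Next, because $g_n(z,t) := b(z)(t^+ + 1/n)^{-\sigma(z)}$ is nonincreasing in $t$ and nondecreasing in $n$, subtracting the equations for $u_n$ and $u_{n+1}$ and testing against $(u_n - u_{n+1})^+$, the monotonicity of $\rho'$ from Lemma \ref{S+} forces $u_n \leq u_{n+1}$ a.e. A strong-maximum-principle / Harnack-type argument applied to $u_1$, whose right-hand side $b(z)(u_1+1)^{-\sigma(z)}$ is strictly positive wherever $b>0$, then yields $u_n \geq u_1 > 0$ a.e.~in $\mathbb{R}^{N}$.

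\smallskip

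\emph{Passage to the limit.} Passing to a subsequence, $u_n \rightharpoonup u$ in $X$ and $u_n \to u$ a.e., with $u \geq u_1 > 0$. For every $v \in X$, the pointwise domination
$$0 \leq \frac{b(z)|v|}{(u_n+1/n)^{\sigma(z)}} \leq \frac{b(z)|v|}{u_1^{\sigma(z)}} \in L^1(\mathbb{R}^{N})$$
permits dominated convergence in the singular term. Testing with $v = u_n - u$, which tends to zero weakly, yields $\limsup_{n \to \infty} \langle \rho'(u_n), u_n - u\rangle_X \leq 0$, and Lemma \ref{S+} upgrades this to strong convergence $u_n \to u$ in $X$. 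Passing to the limit in the weak formulation produces the desired positive weak solution of \eqref{main2}, and the domination bound above also delivers $u^{-\sigma(\cdot)}v \in L^1(\mathbb{R}^{N})$ for every $v \in X$.

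\smallskip

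The principal obstacle is the uniform strict positivity and quantitative lower bound for $u_1$ in the whole space $\mathbb{R}^{N}$: without a boundary there is no Hopf lemma, and the lower bound needed to make $b(z)u_1^{-\sigma(z)}|v|$ integrable against every $v \in X$ must be extracted from a Harnack-type inequality for the degenerate double-phase Baouendi-Grushin operator $-\Delta_{G,a} + |\cdot|^{G(\cdot)-2}(\cdot)$, which is the technical heart of the proof.
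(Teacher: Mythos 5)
Your proposal is correct and follows the same overall four-step strategy as the paper (regularize, solve the approximate problem, establish monotonicity in the regularization parameter, pass to the limit via domination by the first approximant), but the implementation of each step is genuinely different and in several places cleaner than the paper's argument. The paper solves the regularized problem in two stages: Proposition \ref{pro1} freezes the singular coefficient and applies surjectivity of maximal monotone coercive operators, and Proposition \ref{pro2} then applies the Schauder--Tychonov fixed point theorem to the solution map $L_\epsilon$. You instead minimize the energy $J_n$ directly; this avoids the fixed-point machinery entirely, at the cost of having to verify weak lower semicontinuity of $J_n$ over the unbounded domain (which does follow from Lemma \ref{compactbound}, Proposition \ref{compact} and the hypothesis $b/K \in L^\infty$, so the route is available). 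For monotonicity, the paper (Proposition \ref{pro3}) constructs a truncated nonlinearity and runs a constrained minimization, whereas you subtract the weak formulations and test against $(u_n - u_{n+1})^+$; your argument is shorter, relies only on the monotonicity of $\rho'$ and of the regularized nonlinearity, and simultaneously yields uniqueness of the approximating solutions. For the limit passage, the paper asserts in \eqref{eq3pro4} that $\langle\rho'(u_n),h\rangle \to \langle\rho'(u),h\rangle$, which does not follow from weak convergence alone since $\rho'$ is only demicontinuous; your route of first establishing strong convergence $u_n \to u$ in $X$ via the $(\Ss_+)$-property of Lemma \ref{S+} --- noting that the monotone increase $u_n \le u$ forces $\langle\rho'(u_n),u_n-u\rangle \le 0$ directly without any dominated-convergence step --- is more careful and closes a gap in the paper's own passage to the limit.

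Finally, you are right to single out the integrability $b(z)\,u_1^{-\sigma(z)}|v| \in L^1(\mathbb{R}^N)$ for every $v \in X$ as the principal unresolved obstacle. The paper appeals at the same point to the strong maximum principle of Zhang, which yields pointwise strict positivity of $u_1$ but not the quantitative lower bound over all of $\mathbb{R}^N$ that is actually needed for the dominating function $b\,u_1^{-\sigma}|v|$ to lie in $L^1$; the paper's invocation of dominated convergence in \eqref{eq2pro4} therefore suffers from the same gap you identified. Your remark that a Harnack-type estimate for the degenerate operator $-\Delta_{G,a}$ would be required to repair this shows correct judgment about where the real difficulty lies.
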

To prove the above theorem, we first  consider a perturbation of
\eqref{main2} which removes the singularity. So, we consider the
following approximation of problem \eqref{main2}:
\begin{align}\label{aux}
    \begin{aligned}
    -\Delta_{G,a}u&+|u|^{G(x,y)-2}u=\frac{b(x,y)}{(u+\epsilon)^{\sigma(x,y)}}, \qquad && \ \
(x,y)\in \mathbb{R}^{N}, \\
    u>0.
    \end{aligned}
\end{align}
The main way to deal with this problem is the topological approach.
So, given $f\in L^{G(\cdot)}(\mathbb{R}^{N})$, $f\geq 0$ and
$\epsilon \in (0,1)$, we consider the following equation:
\begin{align}\label{epsaux}
    \begin{aligned}
    -\Delta_{G,a}u&+|u|^{G(x,y)-2}u=\frac{b(x,y)}{(f(x,y)+\epsilon)^{\sigma(x,y)}}, \qquad && \ \
(x,y)\in \mathbb{R}^{N}, \\
    u>0.
    \end{aligned}
\end{align}
For the above problem we have the following result.
\begin{proposition}\label{pro1}
Suppose that $(A)$, $(G)$ and $(K)$ hold. Then problem
\eqref{epsaux} admits a unique positive solution $u_{\epsilon}\in
X$.
\end{proposition}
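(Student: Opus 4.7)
The plan is to use the direct method of the calculus of variations, since the auxiliary right-hand side $g_{\epsilon}(z):=b(z)/(f(z)+\epsilon)^{\sigma(z)}$ no longer depends on $u$. Define the energy functional $J_{\epsilon}:X\to\R$ by
\begin{align*}
J_{\epsilon}(u)&=\int_{\R^{N}}\frac{1}{G(z)}\Bigl[|\nabla_{x}u|^{G(z)}+a(x)|\nabla_{y}u|^{G(z)}+|u|^{G(z)}\Bigr]\,dz-\int_{\R^{N}}g_{\epsilon}(z)\,u\,dz,
\end{align*}
whose Gateaux derivative, by Lemma \ref{S+}(i), agrees formally with the weak formulation of \eqref{epsaux}. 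Note that $(f+\epsilon)^{\sigma(z)}\ge\epsilon^{\sigma(z)}\ge\epsilon$ since $\epsilon\in(0,1)$ and $0<\sigma<1$, so $g_{\epsilon}\le b/\epsilon\in L^{G(\cdot)/(G(\cdot)-1)}(\R^{N})$ by hypothesis $(A)$. Hence $J_{\epsilon}$ is well-defined on $X$ via the H\"older inequality.

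First I would show coercivity: for $\|u\|>1$, Lemma \ref{modular}(iii) gives
\begin{align*}
J_{\epsilon}(u)\geq \frac{1}{G^{+}}\,\rho(u)-\frac{1}{\epsilon}\|b\|_{G(\cdot)/(G(\cdot)-1)}\|u\|_{G(\cdot)}\geq \frac{1}{G^{+}}\|u\|^{G^{-}}-\frac{C}{\epsilon}\|u\|,
\end{align*}
which tends to $+\infty$ as $\|u\|\to\infty$ because $G^{-}>2$. Next I would verify sequential weak lower semicontinuity of $J_{\epsilon}$: the modular part is the integral of an integrand convex in $(u,\nabla_{x}u,\nabla_{y}u)$ and hence is weakly lower semicontinuous on the reflexive space $X$ by the classical Ioffe--Olech theorem; the linear part is weakly continuous because $g_{\epsilon}$ is a fixed element of the dual. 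Then the direct method yields a minimiser $u_{\epsilon}\in X$, which is a weak solution of \eqref{epsaux} by computing the Euler--Lagrange equation exactly as in the definition in Lemma \ref{S+}(i).

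To obtain positivity, observe that replacing $u_{\epsilon}$ by $u_{\epsilon}^{+}$ decreases the modular part (its integrand vanishes on $\{u_{\epsilon}<0\}$ instead of being positive) and, since $g_{\epsilon}>0$, increases the term $\int g_{\epsilon}u$; hence $J_{\epsilon}(u_{\epsilon}^{+})\le J_{\epsilon}(u_{\epsilon})$ and $u_{\epsilon}^{+}$ is also a minimiser, so we may assume $u_{\epsilon}\ge 0$. Nontriviality follows by testing: for any nonnegative $\varphi\in C_{0}^{\infty}(\R^{N})\setminus\{0\}$ and small $t>0$, the modular is $O(t^{G^{-}})$ while the linear term is $-t\int g_{\epsilon}\varphi<0$, so $J_{\epsilon}(t\varphi)<0$ for $t$ small, forcing $u_{\epsilon}\not\equiv 0$. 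Uniqueness is a consequence of the strict convexity of $J_{\epsilon}$: the maps $\xi\mapsto|\xi|^{G(z)}$ and $s\mapsto|s|^{G(z)}$ are strictly convex for each $z$ since $G(z)>2$, so two distinct minimisers would contradict the strict convexity of the sum (equivalently, one may subtract two weak formulations tested against $u_{1}-u_{2}$ and invoke the strict monotonicity of $\rho'$ from Lemma \ref{S+}).

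The step I expect to be most delicate is the weak lower semicontinuity on the unbounded domain $\R^{N}$, because the usual compact-embedding shortcut is unavailable there; however, since the integrand depends convexly on $(u,\nabla u)$ and all terms are nonnegative, Fatou's lemma applied to almost-everywhere pointwise convergent subsequences (obtained from $X\hookrightarrow L^{G(\cdot)}(\R^{N})$ combined with Lemma \ref{compactbound} on exhausting balls) suffices. The remaining routine verifications that $g_{\epsilon}\in X^{\ast}$ and that $u_{\epsilon}\in X$ satisfies the weak form follow from hypothesis $(A)$ and H\"older's inequality.
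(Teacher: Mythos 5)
Your approach is genuinely different from the paper's. The paper treats \eqref{epsaux} via monotone operator theory: it defines the operator $V_G = A_G + B_G$, proves it is bounded, continuous and strictly monotone (hence maximal monotone) and coercive (via Lemma \ref{S+}), and then invokes the surjectivity theorem for maximal monotone coercive operators to produce a weak solution for the fixed right-hand side $b(\cdot)[f(\cdot)+\epsilon]^{-\sigma(\cdot)}\in L^{G(\cdot)/(G(\cdot)-1)}(\R^N)$. Your route is variational: since the right-hand side is independent of $u$, the problem has a natural strictly convex energy $J_\epsilon$, and the direct method produces a minimiser, with uniqueness read off directly from strict convexity. Both routes rest on the same ingredients (coercivity from Lemma \ref{modular}, hypothesis $(A)$, strict monotonicity/convexity) and both are fully standard; the operator-theoretic route has the advantage of being the one that the paper re-uses later for the perturbed problem (Proposition \ref{pro2} via the solution map $L_\epsilon$ and the $(\Ss_+)$ property), so it keeps the section's machinery uniform.

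There is, however, a genuine gap. The statement asks for a \emph{positive} solution, and subsequent steps of the paper do rely on strict positivity of the $u_\epsilon$'s (for instance, in the completion of the proof of Theorem \ref{sing} the bound $u_1\le u_n$ with $u_1>0$ a.e.\ is used to dominate the singular integrand). Your truncation argument $J_\epsilon(u_\epsilon^+)\le J_\epsilon(u_\epsilon)$ and the test computation $J_\epsilon(t\varphi)<0$ for small $t$ give only $u_\epsilon\ge 0$ and $u_\epsilon\not\equiv 0$; they do not upgrade this to $u_\epsilon>0$ a.e.\ in $\R^N$. An additional ingredient is needed here, and the paper supplies it by citing the anisotropic strong maximum principle of Zhang \cite{zzhang}, applied to the inequality $-\Delta_{G,a}u_\epsilon+|u_\epsilon|^{G-2}u_\epsilon=g_\epsilon\ge 0$ with $u_\epsilon\ge 0$, $u_\epsilon\not\equiv 0$. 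You should invoke the same (or an equivalent) maximum principle to finish. The remaining points you worry about --- coercivity, weak lower semicontinuity over the unbounded domain via convexity of the integrand, and $g_\epsilon\in X^*$ from hypothesis $(A)$ --- are indeed handled correctly and do not require compactness on $\R^N$.
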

\begin{proof}
Let $B_{G}: L^{G(\cdot)}(\mathbb{R}^{N}) \rightarrow
L^{G'(\cdot)}(\mathbb{R}^{N}) $ be the map defined by
$$B_G(u)=|u|^{G(\cdot)-2}u \ \ \ \mbox{for all} \ \ u\in L^{G(\cdot)}(\mathbb{R}^{N}).$$
Using the Simon inequality (see \cite{simon}), $B_G$ is bounded,
continuous, strictly monotone. Then we consider the map $A_G: X
\rightarrow X^{\ast }$ defined by
$$<A_G(u),v>=\int_{\mathbb{R}^{N}}\left[\left|\nabla_{x}u\right|^{G(x,y)-2}\nabla_{x}u\nabla_{x}v+a(x)\left|\nabla_{y}u\right|^{G(x,y)-2}
     \nabla_{y}u \nabla_{y}v\right]\,dx\,dy,$$
     for all $u,v\in X.$ Using the same argument, we can deduce that this operator
     is bounded continuous, strictly monotone. It follows that the operator
     $V_G=A_G+B_G$ is bounded continuous, strictly monotone (thus, maximal
     monotone, too). On the other hand, in light of Lemma \ref{S+}, we have that $V$ is coercive. We know that a maximal monotone coercive operator is surjective.
      Then, since $b(.)[f(.)+\epsilon]^{-\gamma(.)}\in
     L^{\frac{G(\cdot)}{G(\cdot)-1}}(\mathbb{R}^{N})$, we can find $v_{\epsilon}\in X$ such
     that
     \begin{equation}\label{pos}
     \langle V(v_{\epsilon}),h\rangle=\langle b(.)[f(.)+\epsilon]^{-\gamma(.)},h\rangle, \  \mbox{for every}\ h\in X.
     \end{equation}
     In \eqref{pos} we choose $h=-v_{\epsilon}^{-}$
     ($v_{\epsilon}^{-}=\max(-v_{\epsilon},0)$). Thus, using the fact that $(f(\cdot)+\epsilon)>0$, we obtain
     that $v_{\epsilon}$ is a nonnegative and $v_{\epsilon}\neq 0 $. Moreover, the strict monotonicity of $V(.)$ implies that this
     solution is unique. Finally, the anisotropic maximum principle
     of Zhang \cite{zzhang} implies that $v_{\epsilon}>0$. This completes the proof of Proposition \ref{pro1}.
\end{proof}

Using Proposition \ref{pro1}, we can define the solution map
$L_{\epsilon}: L^{G(\cdot)}(\mathbb{R}^{N})\rightarrow
L^{G(\cdot)}(\mathbb{R}^{N}) $ for problem \eqref{epsaux} by
$$L_{\epsilon}(f)=v_{\epsilon}.$$
\begin{proposition}\label{pro2}
Suppose that assumptions of Proposition \ref{pro1} are fulfilled.
Then problem \eqref{aux} admits a unique positive solution
$u_{\epsilon}\in X$.
\end{proposition}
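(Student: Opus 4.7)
The plan is to obtain $u_\epsilon$ as a fixed point of the solution operator $L_\epsilon$ introduced right after Proposition \ref{pro1}, since any $u\in X$ with $L_\epsilon(u)=u$ is, by construction, a weak solution of \eqref{aux}. I will proceed in three steps: a uniform a priori bound on $L_\epsilon(f)$, continuity and compactness of $L_\epsilon$, and a Schauder-type fixed-point argument, followed by a monotonicity argument for uniqueness.

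\textbf{Step 1 (a priori bound).} For any $f\geq 0$ in $L^{G(\cdot)}(\mathbb{R}^{N})$ and fixed $\epsilon\in(0,1)$, assumption $(A)$ combined with $0<\sigma(\cdot)<1$ gives the pointwise bound
\[
0\leq \frac{b(z)}{(f(z)+\epsilon)^{\sigma(z)}}\leq \epsilon^{-1}b(z),
\]
so the right-hand side of \eqref{epsaux} is dominated, uniformly in $f$, by a fixed element of $L^{G(\cdot)/(G(\cdot)-1)}(\mathbb{R}^{N})$. Testing \eqref{epsaux} with $v_\epsilon=L_\epsilon(f)$ itself, applying Hölder's inequality, and invoking Lemma \ref{modular}, I obtain
\[
\|L_\epsilon(f)\|_X\leq M,
\]
with $M=M(\epsilon)$ independent of $f$.

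\textbf{Step 2 (continuity and compactness of $L_\epsilon$).} Suppose $f_n\to f$ in $L^{G(\cdot)}(\mathbb{R}^{N})$ with $f_n\geq 0$. Passing to a subsequence so that $f_n\to f$ almost everywhere and using the uniform majorant $\epsilon^{-1}b\in L^{G(\cdot)/(G(\cdot)-1)}(\mathbb{R}^{N})$, dominated convergence gives
\[
\frac{b(\cdot)}{(f_n+\epsilon)^{\sigma(\cdot)}}\longrightarrow \frac{b(\cdot)}{(f+\epsilon)^{\sigma(\cdot)}} \quad\text{in } L^{G(\cdot)/(G(\cdot)-1)}(\mathbb{R}^{N}).
\]
Write $v_n=L_\epsilon(f_n)$ and $v=L_\epsilon(f)$; by Step 1, $\{v_n\}$ is bounded in $X$ and admits a weakly convergent subsequence $v_n\rightharpoonup \tilde v$ in $X$. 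Testing the difference of the weak formulations for $v_n$ and $\tilde v$ with $v_n-\tilde v$ and invoking the $(S_+)$-property supplied by Lemma \ref{S+} upgrades weak to strong convergence $v_n\to \tilde v$ in $X$. The uniqueness part of Proposition \ref{pro1} identifies $\tilde v=v$, so $L_\epsilon$ is continuous on the cone $\{f\in L^{G(\cdot)}(\mathbb{R}^{N}):f\geq 0\}$. Boundedness in $X$ together with Proposition \ref{compact} gives the compactness of the image.

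\textbf{Step 3 (fixed point and uniqueness).} The closed convex subset
\[
C=\{f\in L^{G(\cdot)}(\mathbb{R}^{N}):f\geq 0,\ \|f\|_{G(\cdot)}\leq M\}
\]
is mapped into itself by $L_\epsilon$ (by Step 1 and the continuous embedding $X\hookrightarrow L^{G(\cdot)}(\mathbb{R}^{N})$), and $L_\epsilon|_C$ is continuous and compact by Step 2. Schauder's fixed-point theorem then produces $u_\epsilon\in C$ with $L_\epsilon(u_\epsilon)=u_\epsilon$, and Proposition \ref{pro1} guarantees $u_\epsilon>0$. For uniqueness, if $u_1,u_2$ both solve \eqref{aux}, testing the difference of the two equations with $u_1-u_2$ exploits the strict monotonicity of $V_G=A_G+B_G$ on the left-hand side and the antitone dependence of $u\mapsto b(u+\epsilon)^{-\sigma(\cdot)}$ on the right-hand side, forcing both sides to vanish and hence $u_1=u_2$.

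\textbf{Main obstacle.} The delicate point is verifying compactness of $L_\epsilon$ in the unbounded setting: Proposition \ref{compact} only provides compactness of $X\hookrightarrow L^{s(\cdot)}_K(\mathbb{R}^{N})$ into weighted spaces with $s(\cdot)\in(G(\cdot),G^{\ast}(\cdot))$, not into $L^{G(\cdot)}(\mathbb{R}^{N})$ itself. The way out, sketched above, is to combine weak compactness in $X$ with the $(S_+)$-property of Lemma \ref{S+}, which upgrades the weak limit of $\{v_n\}$ to a strong one; an alternative is to reformulate the whole fixed-point scheme on a weighted Lebesgue space and use the hypothesis $b/K\in L^{\infty}(\mathbb{R}^{N})$ from $(A)$ to control the source term directly.
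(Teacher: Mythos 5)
Your plan is essentially the one the paper follows: set up the solution operator $L_{\epsilon}$, obtain an a priori bound, prove continuity, invoke a Schauder-type fixed-point theorem, and then use monotonicity for uniqueness. Step~1 matches the paper's estimate after testing with $v_{\epsilon}$ and using Lemma~\ref{modular}. In Step~2 you actually argue more cleanly than the paper: you push the source term through dominated convergence in $L^{G(\cdot)/(G(\cdot)-1)}(\mathbb{R}^N)$ and then conclude via the $(S_+)$-property of Lemma~\ref{S+}, whereas the paper bounds $\int b(f+\epsilon)^{-\sigma}(u_n-u)\,dz$ by $\|K^{1/G}(u_n-u)\|_{G(\cdot)}$ and appeals to Proposition~\ref{compact} --- which is stated only for $s(\cdot)\in(G(\cdot),G^*(\cdot))$ and thus does not literally cover the endpoint $s=G$ used there. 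Your uniqueness step (testing with $u_1-u_2$ and pairing strict monotonicity of $V_G$ against the antitone dependence of the right-hand side on $u$) is a slight simplification of the paper's $(u_{\epsilon}-v_{\epsilon})^{+}$ truncation but is equally valid.

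The point you flag as the ``main obstacle'' is indeed a genuine gap, and your Step~2 argument does not fill it: what you prove there is sequential continuity of $L_{\epsilon}$ along strongly convergent $f_n$, not relative compactness of $L_{\epsilon}(C)$ in $L^{G(\cdot)}(\mathbb{R}^N)$. A set bounded in $X$ need not be precompact in $L^{G(\cdot)}(\mathbb{R}^N)$ since $\mathbb{R}^N$ is unbounded, and neither the $(S_+)$-property nor Proposition~\ref{compact} (whose target is the weighted space $L^{s(\cdot)}_K$, $s>G$, rather than $L^{G(\cdot)}(\mathbb{R}^N)$) gives precompactness in the space where your set $C$ lives. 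The paper's own invocation of ``Schauder--Tychonov together with Proposition~\ref{compact}'' at this point is terse and does not visibly resolve the same issue in $L^{G(\cdot)}(\mathbb{R}^N)$. The fix you only sketch at the end --- running the fixed-point scheme on a weighted space $L^{s(\cdot)}_K(\mathbb{R}^N)$ with $s(\cdot)\in(G(\cdot),G^*(\cdot))$, where Proposition~\ref{compact} gives genuine compactness, and using $b/K\in L^{\infty}(\mathbb{R}^N)$ to control the source --- is the right move; you would need to re-derive the a~priori bound and the continuity estimate in that weighted norm to make Step~3 airtight.
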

\begin{proof}
In view of Proposition \ref{pro1}, we have
\begin{equation}\label{eq1pro2}
<A_{G}(v_{\epsilon}), h>+\displaystyle \int_{\mathbb{R}^{N}}
|v_{\epsilon}|^{G(z)-2}v_{\epsilon}hdz=\displaystyle
\int_{\mathbb{R}^{N}} b(z)[f(z) +\epsilon]^{-\gamma(z)} hdz, \
\mbox{for all}\ h\in X.
\end{equation}
In \eqref{eq1pro2} we choose $h=v_{\epsilon}=L_{\epsilon}(f)\in X$
and we obtain
$$\rho(v_{\epsilon})= \displaystyle \int_{\mathbb{R}^{N}} b(z)[f(z)
+\epsilon]^{-\gamma(z)} v_{\epsilon}dz,$$ which implies that there
exists a positive constant $C$ such that
$$\min(\|L_{\epsilon}(f)\|^{G^{-}},\|L_{\epsilon}(f)\|^{G^{+}})\leq
C_{\epsilon}\|b\|_{\frac{G(\cdot)}{G(\cdot)-1}}\|L_{\epsilon}(f)
\|$$ and
\begin{equation}\label{eq2pro2}
 \|L_{\epsilon}(f)\|\leq C_{\epsilon}, \ \ \mbox{for all} \
\ f\in L^{G(\cdot)}(\mathbb{R}^{N}).
\end{equation}

In what follows, we prove that $L_{\epsilon}(.)$ is continuous. To
this end, let $f_n \rightarrow f$ in $L^{G(\cdot)}(\mathbb{R}^{N})$.
From \eqref{eq2pro2} we have that $(L_{\epsilon}(f_{n})=u_n)_{n\in
\mathbb{N}}$ is bounded in $X$. So, we may assume that
$$u_n\rightharpoonup u \ \ \mbox{in} \ \ X.$$
Thus, using conditions $(B)$ and $(K)$, we infer that
\begin{align*}
\displaystyle \int_{\mathbb{R}^{N}} b(z)[f(z)
+\epsilon]^{-\gamma(z)} (u_n-u)dz&\leq
\frac{1}{\epsilon^{\sigma^{+}}} \displaystyle
\int_{\mathbb{R}^{N}}b^{\frac{G(z)-1}{G(z)}}(z)
b^{\frac{1}{G(z)}}(z)(u_n-u)dz\\
&\leq \frac{C}{\epsilon^{\sigma^{+}}} \displaystyle
\int_{\mathbb{R}^{N}}b^{\frac{G(z)-1}{G(z)}}(z)
K^{\frac{1}{G(z)}}(z)(u_n-u)dz\\
& \leq \frac{C}{\epsilon^{\sigma^{+}}}
\|b^{\frac{G(\cdot)-1}{G(\cdot)}}\|_{\frac{G(\cdot)}{G(\cdot)-1}}\|
K^{\frac{1}{G(\cdot)}}(u_n-u)\|_{G(\cdot)}.
\end{align*}
This leads to
\begin{equation}\label{eq3pro2}
\displaystyle \lim_{n \rightarrow +\infty}\displaystyle
\int_{\mathbb{R}^{N}} b(z)[f(z) +\epsilon]^{-\gamma(z)} (u_n-u)dz=0.
\end{equation}
Here we used Proposition \ref{compact}. On the other hand, we have
\begin{equation}\label{eq4pro2}
\langle \rho'(u_n),h\rangle=\displaystyle \int_{\mathbb{R}^{N}}
b(z)[f_n(z) +\epsilon]^{-\gamma(z)} hdz, \  \mbox{for all}\ h\in X \
\mbox{and}  \ n\in \mathbb{N}.
\end{equation}
In \eqref{eq4pro2} we choose $h=u_n-u\in X$, pass to the limit as
$n\rightarrow +\infty$ and use \eqref{eq3pro2}. Then we obtain
$$\displaystyle \lim_{n \rightarrow +\infty} \langle\rho'(u_n),u_n-u\rangle=0.$$
So, by Lemma \ref{S+},
\begin{equation}\label{eq5pro2}
u_n \rightarrow u \ \ \mbox{in} \ \ X.
\end{equation}
If in \eqref{eq4pro2} we pass to the limit as $n\rightarrow +\infty$
and use \eqref{eq5pro2}, we obtain that
$$\langle\rho'(u),h\rangle=\displaystyle \int_{\mathbb{R}^{N}}\frac{b(z)}{(f(z)+\epsilon)^{\gamma (z)}}hdz,$$
and $$L_{\epsilon}(f)=u.$$
This proves that $L_{\epsilon}(.)$ is
continuous. The continuity of  $L_{\epsilon}(.)$, together with
\eqref{eq2pro2} and Proposition \ref{compact}, permit the use of the
Schauder-Tychonov fixed point theorem (see \cite{pw}) and we find
$u_\epsilon\in X$ such  that
$L_{\epsilon}(u_{\epsilon})=u_{\epsilon}$ and so, $u_{\epsilon}$ is
a positive solution of \eqref{aux}. \\ Next we show the uniqueness
of this solution. Suppose that $v_{\epsilon}\in X$ is another
positive solution of \eqref{aux}. We have
\begin{align*}
0&\leq
\langle\rho'(u_{\epsilon})-\rho'(v_{\epsilon}),(u_{\epsilon}-v_{\epsilon})^{+}\rangle\\
&= \displaystyle
\int_{\mathbb{R}^{N}}[\frac{b(z)}{(u_{\epsilon}+\epsilon)^{\gamma(z)}}-\frac{b(z)}{(v_{\epsilon}+\epsilon)^{\gamma(z)}}](u_{\epsilon}-v_{\epsilon})^{+}dz
\leq0,
\end{align*}
which implies that $u_{\epsilon}\leq v_{\epsilon}$. Interchanging
the roles of $u_{\epsilon}$ and $v_{\epsilon}$ in the above
argument, we also have that $v_{\epsilon}\leq u_{\epsilon}$,
therefore $u_{\epsilon}=v_{\epsilon}$.
This completes the proof of Proposition \ref{pro2}.
\end{proof}

Now, we prove the following monotonicity property of the map
$\epsilon \rightarrow u_{\epsilon}$.
\begin{proposition}\label{pro3}
Assume that $(B)$, $(G)$  and $(K)$ hold. Then the map $\epsilon
\rightarrow u_{\epsilon}$ from $(0,1]$ into $X$ is nonincreasing.
\end{proposition}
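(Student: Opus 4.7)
The plan is to establish $u_{\epsilon_2} \leq u_{\epsilon_1}$ a.e.\ in $\mathbb{R}^N$ whenever $0 < \epsilon_1 < \epsilon_2 \leq 1$, by testing the difference of the weak formulations of \eqref{aux} against the positive part $(u_{\epsilon_2}-u_{\epsilon_1})^+$, which is admissible since $X$ is stable under truncation. Concretely, from Proposition \ref{pro2} we have for $i=1,2$ and every $h\in X$
\begin{equation*}
\langle \rho'(u_{\epsilon_i}),h\rangle_X = \int_{\mathbb{R}^N} \frac{b(z)}{(u_{\epsilon_i}+\epsilon_i)^{\sigma(z)}}\,h\,dz,
\end{equation*}
so subtracting gives
\begin{equation*}
\langle \rho'(u_{\epsilon_2})-\rho'(u_{\epsilon_1}),h\rangle_X = \int_{\mathbb{R}^N} b(z)\!\left[\frac{1}{(u_{\epsilon_2}+\epsilon_2)^{\sigma(z)}}-\frac{1}{(u_{\epsilon_1}+\epsilon_1)^{\sigma(z)}}\right]\!h\,dz.
\end{equation*}

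Next I would insert $h=(u_{\epsilon_2}-u_{\epsilon_1})^+\in X$. On the set $\{u_{\epsilon_2}>u_{\epsilon_1}\}$ we have $u_{\epsilon_2}+\epsilon_2 > u_{\epsilon_1}+\epsilon_1 > 0$ and $\sigma(z)>0$, hence the bracket is strictly negative; combined with $b>0$ from hypothesis $(A)$, this forces the right-hand side to be $\leq 0$. The integrability of the right-hand side is not an issue because $(u_{\epsilon_i}+\epsilon_i)^{\sigma(z)}\geq \epsilon_i^{\sigma^+}>0$ bounds the singular denominator from below, so the integrand lies in $L^{G(\cdot)/(G(\cdot)-1)}\cdot L^{G(\cdot)} \subset L^1(\mathbb{R}^N)$ by $(A)$ and $(u_{\epsilon_2}-u_{\epsilon_1})^+\in L^{G(\cdot)}(\mathbb{R}^N)$.

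On the other hand, the left-hand side is nonnegative by the strict monotonicity of $\rho'$, which was invoked in the proof of Proposition \ref{pro1} (it follows from Lemma \ref{S+} together with the Simon-type pointwise inequality for $|t|^{G(z)-2}t$). Combining the two sign conclusions, both sides of the tested identity must vanish, and the strict monotonicity of $\rho'$ on $X$ then forces $(u_{\epsilon_2}-u_{\epsilon_1})^+ = 0$ in $X$, i.e.\ $u_{\epsilon_2}\leq u_{\epsilon_1}$ a.e.\ in $\mathbb{R}^N$.

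The step I expect to be the main technical point is the \emph{admissibility} of $(u_{\epsilon_2}-u_{\epsilon_1})^+$ as a test function in $X$: one must check that the positive part of a difference of $X$-elements still belongs to $X$ (so that $\nabla_x$ and $\nabla_y$ behave correctly under truncation in the anisotropic variable-exponent setting associated with $\Delta_{G,a}$). This is however the standard Stampacchia-type chain rule, already implicit in the machinery of Section~3, so the argument reduces to bookkeeping. Once admissibility is granted, the monotone comparison above yields the claim and completes the proof of Proposition \ref{pro3}.
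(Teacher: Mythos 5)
Your proof is correct and takes a genuinely different, shorter route than the paper's. The paper argues indirectly: for $0<\epsilon'<\epsilon\leq 1$ it introduces a truncated nonlinearity $f_\epsilon(z,x)$ cut off at the level $u_{\epsilon'}$, minimizes the associated energy $I_\epsilon$ by coercivity and weak lower semicontinuity to obtain a critical point $v_\epsilon$, tests the Euler--Lagrange equation with $-v_\epsilon^-$ and with $(v_\epsilon-u_{\epsilon'})^+$ to trap $0\leq v_\epsilon\leq u_{\epsilon'}$, and only then invokes the uniqueness from Proposition~\ref{pro2} to identify $v_\epsilon=u_\epsilon$. You instead subtract the two weak formulations directly and test with $(u_{\epsilon_2}-u_{\epsilon_1})^+$; this is precisely the comparison argument the paper itself already uses to prove uniqueness in Proposition~\ref{pro2}, so the admissibility of the test function and the monotonicity of $\rho'$ are available from the same machinery. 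Your route avoids the construction of an auxiliary variational problem entirely and is what one would expect; the paper's detour buys nothing beyond stylistic parallelism with its other existence arguments.

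One minor imprecision in your write-up: in the final step you invoke \emph{strict monotonicity of $\rho'$} to force $(u_{\epsilon_2}-u_{\epsilon_1})^+=0$, but strict monotonicity as stated pairs $\rho'(u_2)-\rho'(u_1)$ against the full difference $u_2-u_1$, not against the truncation $(u_2-u_1)^+$. The cleaner way to close the argument is from the right-hand side alone: the integrand
\begin{equation*}
b(z)\left[\frac{1}{(u_{\epsilon_2}+\epsilon_2)^{\sigma(z)}}-\frac{1}{(u_{\epsilon_1}+\epsilon_1)^{\sigma(z)}}\right](u_{\epsilon_2}-u_{\epsilon_1})^{+}
\end{equation*}
is strictly negative a.e.\ on $\{u_{\epsilon_2}>u_{\epsilon_1}\}$ and vanishes off that set, so once the integral is zero the set $\{u_{\epsilon_2}>u_{\epsilon_1}\}$ must be null. (Alternatively, expand the left-hand pairing over $\{u_{\epsilon_2}>u_{\epsilon_1}\}$ and use the strict vectorial monotonicity of $t\mapsto|t|^{G(z)-2}t$ in the zero-order term.) This is a bookkeeping remark, not a gap; the proof is sound.
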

\begin{proof}
Let $0<\epsilon'<\epsilon\leq 1$ and let
$u_{\epsilon},u_{\epsilon'}\in X$ be the corresponding unique
positive solutions of problem \eqref{aux}.

We define the following
function:
$$f_{\epsilon}(z,x)=\frac{b(z)}{[x^{+}+\epsilon]^{\gamma(z)}}, \ \ \mbox{if} \ \ x\leq u_{\epsilon'}(z) \ \ \mbox{and} \ \
f_{\epsilon}(z,x)=\frac{b(z)}{[u_{\epsilon'}(z)+\epsilon]^{\gamma(z)}},
\ \ \mbox{if} \ \ x> u_{\epsilon'}(z). $$ We set
$F_{\epsilon}(z,x)=\displaystyle \int_{0}^{x}f_{\epsilon}(z,s)ds$
and we introduce the functional $I_{\epsilon}:X\rightarrow
\mathbb{R}$ defined by
 \begin{align*}
 I_{\epsilon}(u)&=\displaystyle
\int_{\mathbb{R}^{N}}
\frac{|\nabla_{x}u|^{G(x,y)}}{G(x,y)}dxdy+\displaystyle
\int_{\mathbb{R}^{N}}
a(x)\frac{|\nabla_{y}u|^{G(x,y)}}{G(x,y)}dxdy+\displaystyle
\int_{\mathbb{R}^{N}} \frac{|u|^{G(x,y)}}{G(x,y)}dxdy\\&
-\displaystyle \int_{\mathbb{R}^{N}}F_{\epsilon}(z,u)dz.
 \end{align*}
  Evidently
$I_{\epsilon}$ is of class $C^{1}$. If $u\in X$ is large enough, we
have
$$
I_{\epsilon}(u)\geq
\frac{\rho(u)}{G^{-}}-\frac{\|b\|_{1}}{\epsilon^{\gamma^{+}}} \geq
\frac{\|u\|^{G^{-}}}{G^{-}}-\frac{\|b\|_{\frac{G(\cdot)}{G(\cdot)-1}}}{\epsilon^{\gamma^{+}}}.
$$
Therefore, $I_{\epsilon}$ is coercive. On the other hand, by
condition $(B)$, we can prove that $I_{\epsilon}$ is weakly lower
semicontinuous. Then, invoking the Weierstrass-Tonelli theorem, we
can find $v_{\epsilon}\in X$ such that
$$I_{\epsilon}(v_{\epsilon})=\displaystyle \inf_{u\in X}I_{\epsilon}(u).$$
This implies that
\begin{equation}\label{eq1pro3}
<\rho'(v_{\epsilon}),h>=\int_{\mathbb{R}^{N}}f_{\epsilon}(z,v_{\epsilon})hdz,
\  \mbox{for all}\ h\in X.
\end{equation}
In \eqref{eq1pro3} we choose $h=-v_{\epsilon}^{-}\in X$ and obtain
$$\rho(v_{\epsilon}^{-})=-\displaystyle \int_{\mathbb{R}^{N}}\frac{b(z)v_{\epsilon}^{-}}{\epsilon^{\gamma(z)}}dz\leq 0.$$
Hence,
$$v_{\epsilon}\geq 0, \ \ v_{\epsilon}\neq 0.$$
Now, in \eqref{eq1pro3} we choose
$h=[v_{\epsilon}-u_{\epsilon'}]^{+}\in X$. We get
$$\langle\rho'(v_{\epsilon}),(v_{\epsilon}-u_{\epsilon'})^{+}\rangle=\displaystyle \int_{\mathbb{R}^{N}}b(z)\frac{[v_{\epsilon}-u_{\epsilon'}\}^{+}}
{[u_{\epsilon'}+\epsilon]^{\gamma(z)}}dz\leq
\langle\rho'(u_{\epsilon'}),(v_{\epsilon}-u_{\epsilon'})^{+}\rangle,$$
and so
$$v_{\epsilon}\leq u_{\epsilon'}.$$
It follows, using the definition of $f_{\epsilon}(.,.)$ and
Proposition \eqref{pro2}, that $v_{\epsilon}=u_{\epsilon}$. Then,
$u_{\epsilon}\leq
u_{\epsilon'}$.
This completes the proof of Proposition \ref{pro3}.
\end{proof}

\textit{Proof of Theorem \ref{sing} completed.} Let
$(\epsilon_{n})\subseteq (0,1]$ be a sequence such that
$\epsilon_{n} \rightarrow 0^{+}$ as $n\rightarrow +\infty$ and
$u_{n}$  be as in Proposition \ref{pro2}. Then
\begin{equation}\label{eq1pro4}
\langle\rho'(u_n),h\rangle=\displaystyle
\int_{\mathbb{R}^{N}}\frac{b(z)}{[u_n+\epsilon_{n}]^{\gamma(z)}}
hdz, \ \ \mbox{for all} \ \ h\in X, \ \ \mbox{all} \ \ n\in
\mathbb{N}.
\end{equation}
In \eqref{eq1pro4} we choose $h=u_n$ and use Proposition \ref{pro3}.
Hence
$$\rho(u_n)\leq G^{+} \displaystyle
\int_{\mathbb{R}^{N}}\frac{b(z)}{u_{1}^{\gamma(z)}} u_ndz$$ which
implies that $(u_n)$ is bounded in $X$. Therefore, we can find $u\in
X$ such that
$$u_n \rightharpoonup u \ \ \mbox{in} \ \ X \ \ \mbox{and} \ \ u_n \rightarrow u \ \ a.e \ \ \mbox{in} \ \ \mathbb{R}^{N}.$$
Consequently, combining Proposition \ref{compact} and the dominated
convergence theorem, with the fact that $u_1\leq u_n$ (see
Proposition \ref{pro3}), we deduce that
\begin{equation}\label{eq2pro4}
\displaystyle \lim_{n\rightarrow +\infty} \displaystyle
\int_{\mathbb{R}^{N}}\frac{b(z)}{[u_n+\epsilon_{n}]^{\gamma(z)}}
hdz=\displaystyle \int_{\mathbb{R}^{N}}\frac{b(z)}{u^{\gamma(z)}}
hdz, \  \mbox{for every}\ h\in X.
\end{equation}
Also, it is easy to see that
\begin{equation}\label{eq3pro4}
\displaystyle \lim _{n\rightarrow
+\infty}<\rho'(u_n),h>=<\rho'(u),h>, \  \mbox{for every}  \ h\in X.
\end{equation}
Then, by  \eqref{eq2pro4} and \eqref{eq3pro4} and passing to the
limit as $n\rightarrow +\infty$ in \eqref{eq1pro4}, we conclude that
$$<A_G(u),h>+\displaystyle \int_{\mathbb{R}^{N}}|u|^{G(z)-2}u hdz=\displaystyle \int_{\mathbb{R}^{N}}\frac{b(z)}{u^{\gamma(z)}}
hdz  \ \mbox{for all}\ h\in X.$$
This proves that $u$ is a weak solution
of problem \eqref{main2}. Since $u_1\leq u_n$ for all $n\in
\mathbb{N}$, we have $u>0$. Finally, we show the uniqueness of this
positive solution. So, suppose that $v\in X$ is another positive
solution of equation \eqref{main2}. As in the proof of Proposition
\ref{pro3}, we can prove that $u=v$.
The proof of Theorem \ref{sing} is now complete. \qed

\subsection*{Acknowledgements} The authors would like to thank Professor Nikolaos S. Papageorgiou for his numerous comments and suggestions on the initial version of this paper. The research of  Vicen\c tiu D.~R\u adulescu and Du\v{s}an D. Repov\v{s} was supported by the Slovenian Research Agency program P1-0292.
The research of Vicen\c tiu D.~R\u adulescu was supported by a grant of the Ministry of Research, Innovation and Digitization, CNCS/CCCDI--UEFISCDI, project number PCE 137/2021, within PNCDI III. Du\v{s}an D. Repov\v{s} also acknowledges the support of the Slovenian Research Agency grants N1-0083 and N1-0114.


\begin{thebibliography}{99}

\bibitem{ball1} J.M. Ball, Convexity conditions and existence theorems in nonlinear elasticity, {\it
Arch. Rational Mech. Anal.} {\bf 63} (1976/77), no. 4, 337-403.

\bibitem{ball2} J.M. Ball, Discontinuous equilibrium solutions and cavitation in nonlinear elasticity, {\it
Philos. Trans. Roy. Soc. London Ser. A} {\bf 306} (1982), no. 1496,
557-611.



    \bibitem{Bahrouni-Radulescu-Repovs-2018}
    A. Bahrouni, V.D. R\u{a}dulescu, D.D. Repov\v{s}, A weighted anisotropic variant of the {C}affarelli-{K}ohn-{N}irenberg inequality and applications, {\it
    Nonlinearity} {\bf 31} (2018), no. 4, 1516--1534.

    \bibitem{Bahrouni-Radulescu-Repovs-2019}
    A. Bahrouni, V.D. R\u{a}dulescu, D.D. Repov\v{s}, Double phase transonic flow problems with variable growth: nonlinear patterns and stationary waves,
 {\it   Nonlinearity} {\bf 32} (2019), no. 7, 2481--2495.

\bibitem{Bahrouni-Radulescu-Winkert-2019}
A. Bahrouni, V.D. R\u{a}dulescu, P. Winkert, Double phase
problems with variable growth and convection for the
Baouendi-Grushin operator, {\it  Z. Angew. Math. Phys.} {\bf 71} (2020), no. 6, Paper No. 183, 15 pp.

\bibitem{Bahrouni-Radulescu-2020}
A. Bahrouni, V.D. R\u{a}dulescu, Singular double-phase systems
with variable growth for the Baouendi-Grushin operator, {\it Discrete Contin. Dyn. Syst.} {\bf 41} (2021), no. 9, 4283--4296.

\bibitem{Bahrouni-Repovs-2018}
    A. Bahrouni, D.D. Repov\v{s}, Existence and nonexistence of solutions for {$p(x)$}-curl systems arising in electromagnetism, {\it
    Complex Var. Elliptic Equ.} {\bf 63} (2018), no. 2, 292--301.

    \bibitem{Baroni-Colombo-Mingione-2015}
    P. Baroni, M. Colombo, G. Mingione, Nonautonomous functionals, borderline cases and related function classes,
 {\it   Algebra i Analiz} {\bf 27} (2015), no. 3, 6--50.

    \bibitem{baouendi} M.S. Baouendi,
     Sur une classe d'op\'erateurs elliptiques d\'eg\'en\'er\'es, {\it Bull. Soc. Math. France} {\bf 95} (1967), 45--87.

    \bibitem{beck}
     L. Beck, G. Mingione, Lipschitz bounds and non-uniform ellipticity, {\it  Comm. Pure Appl. Math.} {\bf 73} (2020), no. 5, 944--1034.

    \bibitem{Caffarelli-Kohn-Nirenberg-1984}
    L. Caffarelli, R. Kohn, L. Nirenberg, First order interpolation inequalities with weights,
   {\it Compositio Math.} {\bf 53} (1984), no. 3, 259--275.

\bibitem{cencelj}
M. Cencelj, V.D. R\u{a}dulescu, D.D. Repov\v{s}, Double phase
problems with variable growth, {\it Nonlinear Anal.}  {\bf 177} (2018),
270--287.

\bibitem{colasuonno}
F. Colasuonno, M. Squassina, Eigenvalues for double phase
variational integrals, {\it Ann. Mat. Pura Appl.} {\bf 195} (2016),
1917--1959.

     \bibitem{Colasuonno-Pucci-2011}
    F. Colasuonno, P. Pucci,
     Multiplicity of solutions for {$p(x)$}-polyharmonic elliptic {K}irchhoff equations, {\it
    Nonlinear Anal.} {\bf 74} (2011), no. 17, 5962--5974.

    \bibitem{Colombo-Mingione-2015}
    M. Colombo, G. Mingione,
     Bounded minimisers of double phase variational integrals,
  {\it  Arch. Ration. Mech. Anal.} {\bf 218} (2015), no. 1, 219--273.

    \bibitem{Edm} D. Edmunds, J. R\'akosnik, Sobolev embeddings with variable exponent, {\it Studia Math.} {\bf 143} (2000), no. 3, 267--293.

  {   \bibitem{Eleuteri} M. Eleuteri, P. Marcellini, E. Mascolo, Regularity for scalar integrals without structure conditions,
 {\it    Adv. Calc. Var.} {\bf 13} (2020), no. 3, 279--300. }

\bibitem{fan} X. Fan, J. Shen, D. Zhao, Sobolev embedding theorems for
spaces $W^{k,p(x)}(\Omega)$, {\it J. Math. Anal. Appl.} {\bf 262} (2001),
749--760.



    \bibitem{franchi}
     B. Franchi, M.C. Tesi,  A finite element approximation for a class of degenerate elliptic equations, {\it  Math. Comp.} {\bf 69} (1999), 41--63.

     \bibitem{21}  J. Giacomoni, S. Tiwari and G. Warnault, Quasilinear parabolic problem with $p(x)$-laplacian: existence,
uniqueness of weak solutions and stabilization, {\it NoDEA Nonlinear
Differential Equations Appl.} {\bf 23} (2016).

    \bibitem{grushin}
    V.V. Grushin,  On a class of hypoelliptic operators, {\it  Math. USSR-Sb.} {\bf 12} (1970), 458--476.

    \bibitem{Hajek-Montesinos-Santalucia-Vanderwerff-Zizler-2008}
    P. H\'{a}jek, V. Montesinos Santaluc\'{\i}a, J. Vanderwerff, V. Zizler,
    {\it Biorthogonal Systems in {B}anach Spaces}, Springer, New York, 2008.

 {  \bibitem{Pucci0} J. Liu, P. Pucci, H. Wu, Q. Zhang,  Existence and blow-up rate
of large solutions of $p(x)$-Laplacian equations with gradient terms, {\it J. Math. Anal. Appl.} {\bf 457} (2018), no. 1, 944--977.  }

    \bibitem{liu} W. Liu, G. Dai,  Existence and multiplicity
    results for double phase problems, {\it J. Differential Equations} {\bf
    265} (2018), 4311-4334.


\bibitem{marce1} P. Marcellini, On the definition and the lower semicontinuity of certain quasiconvex integrals, {\it Ann. Inst. H. Poincar\'e, Anal. Non Lin\'eaire} {\bf 3} (1986), 391-409.

\bibitem{marce2} P. Marcellini, Regularity and existence of solutions of elliptic equations with $p, q$--growth conditions, {\it J. Differential Equations} {\bf 90} (1991), 1-30.


 {    \bibitem{Marce} P. Marcellini,  Growth conditions and regularity for weak solutions to nonlinear elliptic pdes, {\it J. Math. Anal. Appl.}
     {\bf 501} (2021), no. 1, Paper No. 124408, 32 pp. }

      { \bibitem{Mingi}  G. Mingione, V.D. R\u adulescu, Recent developments in problems with nonstandard growth and nonuniform ellipticity, {\it J. Math. Anal. Appl.} {\bf 501} (2021), no. 1, Paper No. 125197, 41 pp. }

     \bibitem{Musielak-1983}
    J. Musielak,    {\it Orlicz Spaces and Modular Spaces}, Springer-Verlag, Berlin, 1983.

\bibitem{pw}
N.S. Papageorgiou, P. Winkert, {\it Applied Nonlinear Functional
Analysis}, De Gruyter, Berlin, 2018.

\bibitem{papageorgiou}
    N.S. Papageorgiou, V.D. R\u adulescu, D.D. Repov\v{s},
    Nonlinear singular problems with indefinite potential term,
  {\it  Anal. Math. Phys.} {\bf 9}:4 (2019), 2237-2262.

    \bibitem{prrzamp}
    N.S. Papageorgiou, V.D. R\u adulescu, D.D. Repov\v{s}, Double-phase problems with reaction of arbitrary growth, {\it Z. Angew. Math. Phys.} {\bf 69} (2018), no. 4, Art. 108, 21 pp.

    \bibitem{prrpams}
    N.S. Papageorgiou, V.D. R\u adulescu, D.D. Repov\v{s}, Double-phase problems and a discontinuity property of the spectrum, {\it Proc. Amer. Math. Soc.} {\bf 147} (2019), 2899--2910.

    \bibitem{prrbook} N.S. Papageorgiou, V.D. R\u adulescu, D.D. Repov\v{s}, {\it Nonlinear Analysis--Theory and Methods}, Springer Monographs in Mathematics, Springer, Cham, 2019.

     \bibitem{vetro}
    N.S. Papageorgiou, C. Vetro, F. Vetro, Positive solutions for singular
    $(p,2)-$equations, {\it Z. Angew. Math. Phys.} {\bf 70}:72 (2019).

  {  \bibitem{Pucci1} P. Pucci, L. Temperini, Existence for fractional $(p,q)$-systems with
critical and Hardy terms in ${\mathbb R}^N$, {\it Nonlinear Anal.} {\bf 211} (2021), 112477. }

    \bibitem{Radulescu-2015}
    V.D. R\u{a}dulescu, Nonlinear elliptic equations with variable exponent: old and new, {\it
    Nonlinear Anal.} {\bf 121} (2015), 336--369.

    \bibitem{rad2019}
    V.D. R\u{a}dulescu,  Isotropic and anisotropic double-phase problems: old and new, {\it Opuscula Math.} {\bf 39} (2019), 259--279.

\bibitem{Radulescu-Repovs-2015}
    V.D. R\u{a}dulescu, D.D. Repov\v{s},
    {\it Partial Differential Equations with Variable Exponents},
    CRC Press, Boca Raton, FL, 2015.

\bibitem{simon} J. Simon, R\'egularit\'e de la solution d'une \'equation non lin\'eaire dans ${\mathbb R}^N$, {\it
 Journ\'ees d'Analyse Non Lin\'eaire} (Proc. Conf., Besan\c{c}on, 1977), pp. 205-227, Lecture Notes in Math., 665, Springer, Berlin, 1978.

  \bibitem{zzhang}
  Q. Zhang,  A strong maximum principle for differential equations with nonstandard p(x)-growth conditions, {\it J. Math. Anal. Appl.} {\bf 312} (2005), 125-143.
    \bibitem{zhang}
     Q. Zhang, V.D. R\u adulescu,  Double phase anisotropic variational problems and combined effects of reaction and absorption terms, {\it J. Math. Pures Appl.}
    {\bf (9) 118} (2018), 159--203.

    \end{thebibliography}
\end{document}